\newcommand{\D}{\mathcal{D}}
\newcommand{\B}{\mathcal{B(H)}}
\newcommand{\R}{\mathcal{R}}
\newcommand{\n}{\mathcal{N}}
\newcommand{\h}{\mathcal{H}}
\newcommand{\m}{\mathcal{M}}
\newcommand{\ka}{\mathcal{K}}
\newtheorem{example}{Example}[section]
\newtheorem{theorem}[example]{Theorem}
\newtheorem{proposition}[example]{Proposition}
\newtheorem{corollary}[example]{Corollary}
\newtheorem{remark}[example]{Remark}
\numberwithin{equation}{section}
\begin{document}

\title[The Cauchy dual and $2$-isometric liftings]{The Cauchy dual and $2$-isometric liftings of\\ concave operators}

 \author[C. Badea]{Catalin Badea}
  \address{Univ. Lille, CNRS, UMR 8524 - Laboratoire Paul Painlev\'{e}, Lille, France}
\email{catalin.badea@univ-lille.fr}
\urladdr{http://math.univ-lille1.fr/~badea/}

\author[L. Suciu]{Laurian Suciu}
 \address{Department of Mathematics and Informatics, ``Lucian Blaga'' University of Sibiu, Dr. Ion Ra\c{t}iu 5-7, Sibiu, 550012, Romania}
 \email{laurians2002@yahoo.com}
\keywords{Cauchy dual operator, 2-isometric liftings, concave operator, $A$-contraction, subnormal operator}
 \subjclass[2010]{47A05, 47A15, 47A20, 47A63.}

\begin{abstract}
We present some $2$-isometric lifting and extension results for Hilbert space concave operators. For a special class of concave operators we study their Cauchy dual operators and discuss conditions under which these operators are subnormal. In particular,  the quasinormality of compressions of such operators is studied.
\end{abstract}

\maketitle

\section{Introduction and preliminaries}

\subsection*{Preamble} Extensions and liftings are classical notions in Operator Theory.
To give some examples, we recall that a linear bounded Hilbert space operator is an isometry if and only if it is the restriction of a unitary operator to an invariant subspace. Also, it is known from the Sz.-Nagy-Foias dilation theory that an operator $C$ is a contraction if and only if it lifts to an isometry $V$; that is if and only if its adjoint $C^*$ is the restriction of a coisometry $V^*$ to an invariant subspace (see \cite{FF,SFb}).

In this paper, we prove some $2$-isometric lifting and extension results for Hilbert space concave operators, that is for operators satisfying the inequality \eqref{eq11} below. A $2$-isometry is an operator for which the equality in \eqref{eq11} holds true.

The notion of Cauchy dual operator for a left invertible operator is more recent, being introduced in $2001$ by Shimorin in his seminal study \cite{Sh} of Wold-type decompositions and wandering subspaces. Here we study the Cauchy dual operators for the special class of concave operators satisfing the condition \eqref{eq13} below.

\subsection*{Notation and basic definitions} For a complex Hilbert spaces $\h$ we denote by $\B$ the Banach algebra of all bounded linear operators on $\h$ with the unit element $I=I_{\h}$ (the identity operator). For $T\in \B$ the kernel and the (closed) range of $T$ are denoted by $\n(T)$ respectively $\overline{\R(T)}$. Also, $T^*\in \B$ stands for the adjoint operator of $T$, and the orthogonal projection in $\B$ onto a closed subspace $\m \subset \h$ is denoted by $P_{\m}$. For $T\in \B$ we consider the operator $\Delta_T:=T^*T-I$. The operator $T$ is called {\it expansive} (respectively {\it contractive}) if $\Delta_T\ge 0$ (respectively $\Delta_T\le 0$). If $T$ is a contraction, then $D_T=-\Delta_T$ is the defect operator and $\mathcal{D}_T=\overline{\R(D_T)}$ is the defect space of $T$.

Recall that an operator $T\in \B$ is said to be \emph{normal} if $TT^*=T^*T$, \emph{quasinormal} if $TT^*T=T^*T^2$, \emph{hyponormal} if $TT^*\le T^*T$, an \emph{isometry} if $T^*T=I$ and, finally, $T$ is \emph{unitary} if it is a normal isometry.

A (closed) subspace $\h_0\subset \h$ is \emph{invariant} for $T\in \B$ if $T\h_0\subset \h_0$, and $\h_0$ is \emph{reducing} for $T$ if $T\h_0 \subset \h_0$ and $T^*\h_0 \subset \h_0$. If $T\in \B$ and $\h$ is a closed subspace of another Hilbert space $\ka$, then $S\in \mathcal{B}(\ka)$ is an {\it extension} of $T$ if $\h$ is invariant for $S$ and $T=S|_{\h}$. This definition can be rephrased as $J_{\h,\ka}T=SJ_{\h,\ka}$ where $J_{\h,\ka}:\h\to \ka$ is the natural embedding of $\h$ into $\ka$. We also say that $S$ is a {\it lifting} of $T$ if $S^*$ is an extension of $T^*$; that is if $P_{\ka,\h}S=TP_{\ka,\h}$ where $P_{\ka,\h}=J^*_{\h,\ka}$ is the projection of $\ka$ onto $\h$.

An operator $T$ on $\h$ is called \emph{subnormal} if it has a normal extension on a Hilbert space $\ka\supset \h$.

Recall (\cite{Sh}, \cite{Ch, CC}, \cite{KFAH}) that an operator $T$ on $\h$ is called {\it concave} if it satisfies the inequality
\begin{equation}\label{eq11}
T^{*2}T^2-2T^*T+I\le 0.
\end{equation}
The operator $T$ is said to be a 2-{\it isometry} whenever the equality in \eqref{eq11} holds true. In this case, according to \cite{Ag, AS1, AS2}, the above operator $\Delta_T = T^*T-I$ is called the {\it covariance operator} of $T$, while the scalar ${\rm cov}(T):=\|\Delta_T\|^{1/2}$ is called the {\it covariance} of $T$. It is obvious from the inequality \eqref{eq11} that $\Delta_T\ge 0$, i.e. $T$ is expansive, hence $T$ is left invertible.

For a positive operator $A\in \B$ and an integer $m\ge 1$ we define the operator
\begin{equation}\label{eq12}
B_A^m(T):=\sum_{j=0}^m (-1)^j\begin{pmatrix} m\\ j \end{pmatrix} T^{*j}AT^j, \quad T\in \B.
\end{equation}

Accordingly to \cite{JKKL} we say that $T$ is $(A,m)$-{\it expansive} (or $(A,m)$-{\it contractive}) if $B_A^m(T)\le 0$ ($B_A^m(T)\ge 0$). Also, $T$ is said to be $(A,m)$-{\it hyperexpansive} (or $(A,m)$-{\it hypercontractive}) if $B_A^n(T)\le 0$ ($B_A^n(T)\ge 0$) for all positive integers $n\le m$. Finally, $T$ is called {\it completely} $A$-{\it hyperexpansive} ({\it completely} $A$-{\it hypercontractive}) if $B_A^m(T)\le 0$ ($B_A^m(T)\ge 0$) for all $m\ge 1$. When $A=I$ we denote $B_m(T)=B_I^m(T)$ and we shortly refer to m-(hyper)expansivity instead of $(I,m)$-(hyper)expansivity etc.

In particular, following the terminology of \cite{CFM, MMS, S-2006, S-2009} we say that $T$ is an $A$-{\it contraction} if $T$ is $(A,1)$-contractive,\, i.e. $T^*AT\le A$, and $T$ is an $A$-{\it isometry} when $T^*AT=A$. Every $A$-isometry is completely $A$-hypercontractive (and completely $A$-hyperexpansive). In this context, the inequality \eqref{eq11} can be written as $T^*\Delta_T T\le \Delta_T$. Hence, concave operators are $\Delta_T$-contractions, or $2$-hyperexpansive operators, while  $2$-isometries are $\Delta_T$-isometries.

Such general classes of operators were studied by many authors, from several points of view. We refer the reader to \cite{Ag}, \cite{AS1, AS2}, \cite{Al}, \cite{ACJS}, \cite{At}, \cite{Ch, CC}, \cite{CFM}, \cite{Ex}, \cite{JKKL}, \cite{KFAH}, \cite{MMS, MS}, \cite{McC}, \cite{Ol}, \cite{R}, \cite{SS}, \cite{Sh}, \cite{S-2006, S-2009} for some of these contributions.

\subsection*{Organization of the paper} In Section 2 we construct several $2$-isometric liftings for a given concave operator. Also, we show that concave operators have certain extensions with block matrices containing contractions and unitary operators on the main diagonal, so having their spectrum in the closed unit disc (as in \cite {McC}, \cite{Ba}). Finally, we characterize concave operators $T$ which are $\Delta_T$-{\it regular}, that is they satisfy the condition
\begin{equation}\label{eq13}
\Delta_TT=\Delta_T^{1/2}T\Delta_T^{1/2}.
\end{equation}

In Section 3 we study properties of the Cauchy dual $T'=T(T^*T)^{-1}$ associated to a given concave operator $T$. The Cauchy dual operator was studied in \cite{Sh}, \cite{Ch}, \cite{CC} and recently in \cite{ACJS}. We describe the $\Delta_T$-regular concave operators in terms of $T'$ and we solve the Cauchy dual subnormality problem (which appear in \cite{ACJS} for $2$-isometries) for this class of operators. Recall that J. Agler showed in \cite{Ag} that a contraction $T$ is subnormal if and only if it is completely hypercontractive, while in \cite{ACJS} it was proved that if $T$ is a $\Delta_T$-regular $2$-isometry, then $T'$ is subnormal. Here we show that for $T$ concave and $\Delta_T$-regular the Cauchy dual contraction $T'$ is subnormal if and only if $T$ is completely hyperexpansive.

In Section 4 we present several conditions which are equivalent to the quasinormality of the compression to $\overline{\R(\Delta_T)}$ of a $\Delta_T$-regular concave operator $T$. The same thing is done for the compression of $T'$ to $\overline{\R(\Delta_T)}$. As usual, by the compression of $T$ to a subspace $\h_0\subset \h$ we mean the operator $P_{\h_0}T|_{\h_0}$.

\medskip

\section{Liftings and extensions of concave operators}
\medskip

We proceed to the construction of two special 2-isometric liftings for the class of concave operators. Recall that a lifting $S$ on $\ka \supset \h$ is said to be \emph{minimal} if $\ka=\bigvee_{n\ge 0} S^n\h.$

\begin{theorem}\label{te21}
Let $T\in \B$ be a concave operator. Then:
\begin{itemize}
\item[(i)] $T$ has a 2-isometric lifting $S$ on a Hilbert space $\ka=\h^{\perp} \oplus \h$ with the covariance ${\rm cov}(S)=\sqrt{2}{\rm max}\{1,\|\Delta_T\|^{1/2}\}$ and having a block matrix of the form
    \begin{equation}\label{eq21}
    S=
    \begin{pmatrix}
    W & X\\
    0 & T
    \end{pmatrix}
    \begin{bmatrix}
    \h^{\perp}\\
    \h
    \end{bmatrix},
    \quad W=
    \begin{pmatrix}
    S_+^1 & \sqrt{2}E_0\\
    0 & S_+^0
    \end{pmatrix}
    \begin{bmatrix}
    \h_1\\
    \h_0
    \end{bmatrix},
    \quad W^*X=0,
    \end{equation}
where $W$ is a 2-isometry on a nontrivial decomposition $\h^{\perp} = \h_1\oplus \h_0$ with $S_+^0,S_+^1$ unilateral shifts and $E_0$ is an isometry.

\item[(ii)] $T$ has a minimal 2-isometric lifting $S$, with $W=S|_{\h^{\perp}}$ in \eqref{eq21} an isometry such that $W^*X=0$ and ${\rm cov}(S)=\|\Delta_T\|^{1/2}$.
\end{itemize}
\end{theorem}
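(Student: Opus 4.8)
My plan is to start from the block form \eqref{eq21} and translate the requirement ``$S$ is a $2$-isometry'' into identities on $\h$. Set $\Delta_T=T^*T-I$ and $D:=\Delta_T-T^*\Delta_T T$; since $T^{*2}T^2-2T^*T+I=T^*\Delta_T T-\Delta_T$, concavity means exactly $D\ge 0$ (and then $\Delta_T\ge 0$). For $S=\begin{pmatrix}W&X\\0&T\end{pmatrix}$ on $\h^\perp\oplus\h$ with $W^*X=0$ one checks immediately that $S^*$ leaves $\h$ invariant with $S^*|_\h=T^*$, so $S$ is a lifting of $T$, and that $\Delta_S=\Delta_W\oplus(X^*X+\Delta_T)$. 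Using $S^{*2}S^2-2S^*S+I=S^*\Delta_S S-\Delta_S$, the equation $S^*\Delta_S S=\Delta_S$ splits into three conditions: (a) $W^*\Delta_W W=\Delta_W$, i.e. $W$ is a $2$-isometry; (b) $W^*\Delta_W X=0$; and the corner identity (c) $X^*\Delta_W X+T^*(X^*X)T-X^*X=D$. Since $\Delta_S$ is block diagonal, ${\rm cov}(S)^2=\|\Delta_S\|=\max\{\|\Delta_W\|,\|X^*X+\Delta_T\|\}$. The entire proof reduces to producing $W,X$ meeting (a)--(c) with the prescribed norms.

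For part (i) I would keep $W$ a genuine $2$-isometry, so as to exploit the free term $X^*\Delta_W X$ in (c). Take $S_+^0,S_+^1$ unilateral shifts on $\h_0,\h_1$ and an isometry $E_0:\h_0\to\h_1$ with $\R(E_0)\subseteq\n(S_+^{1*})$; then $W=\begin{pmatrix}S_+^1&\sqrt2E_0\\0&S_+^0\end{pmatrix}$ satisfies $\Delta_W=0\oplus 2I$, which verifies (a) by a direct computation. Writing $X=\begin{pmatrix}X_1\\X_0\end{pmatrix}$, the conditions $W^*X=0$ and (b) reduce to $\R(X_0)\subseteq\n(S_+^{0*})$ and $\R(X_1)\subseteq\n(S_+^{1*})\cap\n(E_0^*)$, while (c) becomes $2X_0^*X_0+T^*(X^*X)T-X^*X=D$. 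The clean choice is $X_0^*X_0=D$ and $X_1^*X_1=T^*\Delta_T T$ (both positive by concavity), for then $X^*X=\Delta_T$ and (c) reads $2D+T^*\Delta_T T-\Delta_T=D$, which holds. With this choice $\Delta_S=0\oplus 2I\oplus 2\Delta_T$, so $\|\Delta_S\|=2\max\{1,\|\Delta_T\|\}$ and ${\rm cov}(S)=\sqrt2\max\{1,\|\Delta_T\|^{1/2}\}$, as claimed; taking $\h_0,\h_1$ to be the shift spaces over $\overline{\R(D)}$ and $\overline{\R(T^*\Delta_T T)}$ gives the required nontrivial splitting and the stated form of $W$.

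For part (ii) I would instead force $W$ to be an isometry, so $\Delta_W=0$; then (a),(b) are automatic and (c) collapses to $T^*(X^*X)T-X^*X=D$. Equivalently, putting $P:=X^*X+\Delta_T$, I must produce a positive operator $P\ge\Delta_T$ with $T^*PT=P$, after which $X:=(P-\Delta_T)^{1/2}$, viewed as a map of $\h$ into a copy of the wandering subspace, does the job. Minimality then dictates $\h^\perp=\bigvee_{k\ge0}W^kX\h$, i.e. $W$ is the unilateral shift on $\bigoplus_{k\ge0}\overline{\R(X)}$, so that $W^*X=0$, $W$ is an isometry, and $\ka=\bigvee_{n\ge0}S^n\h$ hold by construction; moreover ${\rm cov}(S)=\|P\|^{1/2}$, so the covariance claim becomes $\|P\|=\|\Delta_T\|$ for the minimal solution.

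The construction of $P$ is where I expect the real difficulty. Unlike in (i), the relevant equation $T^*(X^*X)T-X^*X=D$ is the ``expansive'' Stein equation, and the obvious candidate fails: concavity makes $T^{*n}\Delta_T T^n$ decrease to a strong limit $L$, and $\sum_{k\ge0}T^{*k}DT^k=\Delta_T-L$ only yields $T^*(\Delta_T-L)T-(\Delta_T-L)=-D$, with the wrong sign. I would instead build $P$ from the boundary data of $T$ on $\n(T^*)$ propagated by the Cauchy dual $T'=T(T^*T)^{-1}$, which is a contraction (as $T^*T\ge I$) and satisfies $T^*T'=T'^*T=I$; this gives $T^*\big(\sum_{k\ge0}T'^kET'^{*k}\big)T=\sum_{k\ge0}T'^kET'^{*k}$ whenever $E\ge 0$ is supported on $\n(T^*)$. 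The hard part is to choose $E$ so that the series converges to a bounded $P$ dominating $\Delta_T$ and to identify the minimal such $P$, for which I expect $\|P\|=\|\Delta_T\|$; this is exactly where concavity (the decrease of $T^{*n}\Delta_T T^n$) must be used decisively, and it is the main obstacle of the theorem.
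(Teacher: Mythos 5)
Your reduction of the $2$-isometry condition for $S=\begin{pmatrix} W & X\\ 0 & T\end{pmatrix}$ with $W^*X=0$ to the identities (a) $W^*\Delta_WW=\Delta_W$, (b) $W^*\Delta_WX=0$, (c) $X^*\Delta_WX+T^*(X^*X)T-X^*X=D$ is correct, and your part (i) is essentially the paper's own construction: the choices $X_0^*X_0=D=\Omega_T$ and $X_1^*X_1=T^*\Delta_TT$ reproduce exactly the columns $J_0\Omega_T^{1/2}$ and $J_1(\Delta_T-\Omega_T)^{1/2}$ of the matrix \eqref{eq22}, and your covariance computation agrees. One slip in your final instantiation: you cannot take $\h_1=l_+^2(\overline{\R(T^*\Delta_TT)})$. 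You need simultaneously an isometry $E_0:\h_0\to\h_1$ with $\R(E_0)\subseteq\n(S_+^{1*})$ and $\R(X_1)\subseteq\n(S_+^{1*})\cap\n(E_0^*)$; with your choice $\R(X_1)$ is dense in the wandering subspace $\overline{\R(T^*\Delta_TT)}$ of $S_+^1$, so $\n(E_0^*)\supseteq\R(X_1)$ forces $E_0^*$ to vanish on that wandering subspace, contradicting $E_0^*E_0=I$ unless $\h_0=\{0\}$ (and the wandering subspace can even be trivial while $\h_0\neq\{0\}$: a weighted shift with weights $\sqrt2,1,1,\dots$ is concave with $T^*\Delta_TT=0$ but $\Omega_T\neq0$). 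The fix is the paper's choice $\h_1=l_+^2\bigl(\h_0\oplus\overline{(\Delta_T-\Omega_T)\h}\bigr)$, whose wandering subspace contains an orthogonal copy of $\h_0$ next to $\R(X_1)$.

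Part (ii), however, contains a genuine gap, and you flagged it yourself: everything reduces to producing $P\ge\Delta_T$ with $T^*PT=P$ and $\|P\|=\|\Delta_T\|$, and you do not produce it. This existence statement is not a routine verification; the paper imports it from the Treil--Volberg fixed point theorem (via weighted commutant lifting, see \cite{BFF}, \cite{TV}): since $T^*T\ge I$, there exist an isometry $V'$ on $\ka'\supset\h$ and $B:\h\to\ka'$ with $BT=V'B$, $P_{\h}B=\Delta_T^{1/2}$ and $\|B\|=\|\Delta_T\|^{1/2}$; then $A=B^*B$ satisfies $T^*AT=A$, $A\ge(P_{\h}B)^*(P_{\h}B)=\Delta_T$ and $\|A\|=\|\Delta_T\|$, which is exactly your $P$. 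Your alternative program, $P=\sum_{k\ge0}T'^kET'^{*k}$ with $E\ge0$ supported on $\n(T^*)$, does satisfy $T^*PT=P$ formally (using $T^*T'=I$ and $T^*P_{\n(T^*)}=0$), but convergence of the series, the domination $P\ge\Delta_T$, and the norm equality are precisely the content of the quoted theorem and are left unaddressed; for a general contraction $T'$ the sums $\sum_k\|T'^{*k}h\|^2$ can diverge, so even boundedness fails without further input. Finally, your expectation that concavity "must be used decisively" at this point is misplaced: the Treil--Volberg step uses only expansivity $T^*T\ge I$, consistent with the paper's remark after the theorem that the class of operators admitting such minimal $2$-isometric liftings is strictly larger than the concave ones (e.g.\ $T^*T$-isometries).
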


\begin{proof}
Assume $T$ concave, that is $T^*\Delta_TT\le \Delta_T$. Let $\Omega_T:=\Delta_T-T^*\Delta_TT$. Clearly, we may assume $\Delta_T\neq 0$ and $\Omega_T\neq 0$, otherwise $T$ is an isometry or a 2-isometry, respectively. Then it is easy to see that for every $h\in \h$ and any integer $n\ge 1$ one has
$$
\|\Delta_T^{1/2}h\|^2=\sum_{j=0}^n\|\Omega_T^{1/2}T^jh\|^2+\|\Delta_T^{1/2}T^{n+1}h\|^2.
$$
Since $T^{*(n+1)}\Delta_TT^{n+1}\le T^{*n}\Delta_TT^n\le \Delta_T$, the sequence $\{T^{*n}\Delta_TT^n\}$ converges strongly to an operator $A_T\ge 0$ such that $T^*A_TT=A_T$. So, from the above equality we obtain the relation
$$
\|\Delta_T^{1/2}h\|^2=\sum_{j=0}^{\infty} \|\Omega_T^{1/2}T^jh\|^2+\|A_Th\|^2,
$$
whence we have $\overline{\Delta_T\h}\subset \h_0 \oplus \overline{A_T\h}$ where $\h_0=l_+^2(\overline{\Omega_T\h})$.

For the construction of $S$ in (i) we use the operator $\Delta_T$. The previous equality suggests that one can firstly obtain a one step lifting $T_0$ of $T$ on $\h\oplus \h_0$ and later a second step lifting of $T$ (one step for $T_0$) on $\h\oplus \h_0 \oplus \h_1$ where $\h_1=l_+^2(\h_0\oplus \overline{(\Delta_T-\Omega_T)\h})$. Clearly, since $\Delta_T-\Omega_T=T^*\Delta_TT$, we have $(\Delta_T-\Omega_T)\h \subset \h_0\oplus A_T\h$.

We define $S$ on $\ka=\h_1\oplus (\h_0\oplus \h)=\h_1\oplus \h_0\oplus \h$ by the block matrices
\begin{equation}\label{eq22}
S=
\begin{pmatrix}
S_+^1& E\\
0 & T_0
\end{pmatrix}
=
\begin{pmatrix}
S_+^1 & \sqrt{2}E_0 & J_1(\Delta_T-\Omega_T)^{1/2}\\
0 & S_+^0 & J_0\Omega_T^{1/2}\\
0 & 0 & T
\end{pmatrix}.
\end{equation}
Here $S_+^0, S_+^1$ are the forward shifts on $\h_0$, respectively $\h_1$, while $E_0,J_1,J_0$ are the embedding mappings of $\h_0$ and
$\overline{(\Delta_T-\Omega_T)\h}$ into $\h_1$, respectively of $\overline{\Omega_T \h}$ into $\h_0$. Also, the operator $E: \h_0 \oplus \h\to \h_1$ and the lifting $T_0$ of $T$ on $\h_0\oplus \h$ have the matrix representations which appear in the second matrix of $S$ in \eqref{eq22}, respectively.

Since $S_+^{1*}E=0$ and $S_+^{0*}J_0\Omega_T^{1/2}=0$, we have
$$
\Delta_{T_0}=0\oplus (2\Delta_T-T^*\Delta_TT), \quad \Delta_S=0\oplus [(2I_{\h_0}\oplus T^*\Delta_TT)+\Delta_{T_0}]=0\oplus 2(I_{\h_0}\oplus \Delta_T).
$$

Now a simple computation gives the equality $S^*\Delta_SS=\Delta_S$, i.e. $S$ is a 2-isometry and by \eqref{eq22}, $S$ is a lifting of $T$. Obviously, $S$ can be expressed in the terms of $W$ and $X$ as in \eqref{eq21} with $W^*X=0$, $W$ being a 2-isometry (as the restriction of $S$ to its invariant subspace $\h_1 \oplus \h_0$). Also, from the above representation of $\Delta_S$ we get ${\rm cov}(S)=\|\Delta_S\|^{1/2}=\sqrt{2} {\rm max}\{1, \|\Delta_T\|^{1/2}\}$ taking into account that $\Delta_T\ge 0$. The assertion (i) is now proved.

To show the assertion (ii) we use that $T$ is expansive, i.e. $T^*T\ge I$. Thus, by a result of Treil and Volberg (see \cite{BFF}, \cite{TV}), there exist a Hilbert space $\ka'\supset \h$, an isometry $V'$ on $\ka'$ and an operator $B:\h \to \ka'$ such that $BT=V'B$, $P_{\h}B=\Delta_T^{1/2}$ and $\|B\|=\|\Delta_T\|^{1/2}$. Then the operator $A=B^*B$ satisfies the conditions: $T^*AT=A$, $\Delta_T \le A$ and $\|A\|=\|\Delta_T\|$. Now we define the lifting $S_0$ of $T$ on the space $\ka_0=\h^{\perp}\oplus \h$, where $\h^{\perp}=l_+^2(\overline{(A-\Delta_T)\h})$, by
\begin{equation}\label{eq23}
S_0=
\begin{pmatrix}
S_+ & J(A-\Delta_T)^{1/2}\\
0 & T
\end{pmatrix}.
\end{equation}
Here $S_+$ is the forward shift on $\h^{\perp}$ and $J$ is the embedding mapping of $\overline{(A-\Delta_T)\h}$ into $\h^{\perp}$. It is clear that $\Delta_{S_0}= 0\oplus A$, which immediately gives $S_0^*\Delta_{S_0}S_0=\Delta_{S_0}$. Thus $S_0$ is a $2$-isometry. Also, one has ${\rm cov}(S_0)=\|A\|^{1/2}=\|\Delta_T\|^{1/2}$. By a standard argument one can see that the $2$-isometric lifting $S_0$ for $T$ is minimal, that is it satisfies the condition $\ka_0=\bigvee_{n\ge 0} S_0^n\h$. This ends the proof.
\end{proof}

The converses of the statements (i) and (ii) in Theorem \ref{te21} are not true. In other words, the class of operators which have 2-isometric liftings as in (i) and (ii) above is larger than that of concave operators. For example, let $T$ be a $T^*T$-isometry, i.e. $T^{*2}T^2=T^*T$. Let $S$ be the operator on $\widehat{\h}=l_+^2(\h) \oplus \h$ with the matrix representation
$$
S=
\begin{pmatrix}
S_+ & J\\
0 & T
\end{pmatrix},
$$
where $S_+$ is the forward shift on $l_+^2(\h)$ and $J$ is the canonical embedding of $\h$ into $l_+^2(\h)$. It is clear that $S$ is a minimal 2-isometric lifting for $T$ as in Theorem \ref{te21} (ii). But $T$ is not necessary concave. In fact, since $T|_{\overline{\R(T)}}$ is an isometry, one can easily see that $T$ is concave if and only if the operator $T_1=P_{\overline{\R(T)}}T|_{\n(T^*)}$ is expansive.

We mention without further details that if $T$ satisfies the inequality $T^{*2}T^2\le T^*T$, then $T$ has a 2-isometric lifting as in Theorem \ref{te21}, (i).

An interesting problem in this context is to describe the class of all operators having 2-isometric liftings; presently we do not know the answer.

Returning to Theorem \ref{te21}, we remark that the lifting in (i) is not minimal, in general. In this case the lifts produced by (i) and (ii) are not unitarily equivalent.

The minimal lifting from (ii) leads to an extension of $T$ with interesting properties.

\begin{proposition}\label{pr22}
Every concave operator $T$ on $\h$ has an extension $\widetilde{T}$ on a Hilbert space $\m\supset \h$, which with respect to a decomposition $\m=\m_0\oplus \m_1$ has the form
\begin{equation}\label{eq24}
\widetilde{T}=
\begin{pmatrix}
C & \delta E\\
0 & U
\end{pmatrix},
\end{equation}
where $C,E$ are contractions, $U$ is unitary and $\delta=\|\Delta_T\|^{1/2}$, such that there exist a Hilbert space $\m'$ and two isometries $J_C:\D_C\to \m'$, $J_E:\D_E\to \m'$ satisfying the condition
\begin{equation}\label{eq25}
C^*E+D_CJ_C^*J_ED_E=0.
\end{equation}

\end{proposition}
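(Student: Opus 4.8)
The plan is to upgrade the minimal $2$-isometric lifting of Theorem~\ref{te21}(ii) into the desired extension, using the intertwining operator furnished by the Treil--Volberg theorem. Recall from that proof that, $T$ being expansive, there are a Hilbert space $\ka'\supset\h$, an isometry $V'$ on $\ka'$ and an operator $B:\h\to\ka'$ with $BT=V'B$, $P_{\h}B=\Delta_T^{1/2}$ and $\|B\|=\delta:=\|\Delta_T\|^{1/2}$; the positive operator $A:=B^*B$ then satisfies $T^*AT=A$, $\Delta_T\le A$ and $\|A\|=\delta^2$. We may assume $\Delta_T\neq 0$ (otherwise $T$ is an isometry and extends trivially to a unitary). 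First I would dilate $V'$ to its minimal unitary extension $U$ on a space $\m_1:=\ka''\supset\ka'$, so that $\ka'$ is $U$-invariant and $BT=UB$ with $U$ unitary. I then set $\m_0:=\h$, $\m:=\m_0\oplus\m_1$, and seek $\widetilde T$ of the form \eqref{eq24} with this $U$ in the lower corner.

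For the embedding, note that $\|A\|=\delta^2$ gives $A/\delta^2\le I$, so $G:=(I-A/\delta^2)^{1/2}$ is a positive contraction on $\h$ and $\beta:=B/\delta$ satisfies $G^2+\beta^*\beta=I$. Hence $j:=\left(\begin{smallmatrix} G\\ \beta\end{smallmatrix}\right):\h\to\m_0\oplus\m_1$ is an isometry. Writing $\widetilde T=\left(\begin{smallmatrix} C & \delta E\\ 0 & U\end{smallmatrix}\right)$, the requirement $\widetilde Tj=jT$ (that $\widetilde T$ extend $T$) splits into two block identities: the lower one, $U\beta=\beta T$, holds automatically since $UB=BT$, while the upper one collapses — the factor $\delta$ cancelling against $\beta=B/\delta$ — to the single identity
\[
CG+EB=GT .
\]
It then remains to produce contractions $C:\m_0\to\m_0$ and $E:\m_1\to\m_0$ solving this identity for which the row $(C\ E)$ is itself a contraction, since this is exactly what yields the isometries $J_C,J_E$ of \eqref{eq25}.

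Indeed, \eqref{eq25} holds for suitable isometries $J_C:\D_C\to\m'$, $J_E:\D_E\to\m'$ if and only if the block operator $\left(\begin{smallmatrix} C & E\\ J_CD_C & J_ED_E\end{smallmatrix}\right)$ is an isometry: its two diagonal Gram entries are automatically $C^*C+D_C^2=I$ and $E^*E+D_E^2=I$, while its off-diagonal entry $C^*E+D_CJ_C^*J_ED_E$ is precisely the left-hand side of \eqref{eq25}. Such isometries exist precisely when there is a contraction $\Gamma:\D_E\to\D_C$ with $-C^*E=D_C\Gamma D_E$ — one then takes $\m'=\D_C\oplus\overline{\R(D_\Gamma)}$, $J_Cx=(x,0)$ and $J_Ey=(\Gamma y,D_\Gamma y)$ with $D_\Gamma=(I-\Gamma^*\Gamma)^{1/2}$, so that $J_C^*J_E=\Gamma$ — and by the standard positivity (Douglas factorization) lemma this in turn is equivalent to $\left(\begin{smallmatrix} D_C^2 & -C^*E\\ -E^*C & D_E^2\end{smallmatrix}\right)\ge0$, that is, to $(C\ E)$ being a contraction. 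Thus \eqref{eq25} comes for free once $(C\ E)$ is a row contraction.

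The final step is to solve $CG+EB=GT$ by a row contraction, i.e. to find a contraction $R=(C\ E)$ with $R\Lambda=GT$, where $\Lambda:=\left(\begin{smallmatrix} G\\ B\end{smallmatrix}\right)$. Such an $R$ exists — defined on $\overline{\R(\Lambda)}$ by $\Lambda x\mapsto GTx$ and extended by $0$ on the orthogonal complement — exactly when $\|GTx\|\le\|\Lambda x\|$ for all $x\in\h$. This is where the Treil--Volberg relations are decisive: using $T^*AT=A$ one computes $(GT)^*(GT)=T^*(I-A/\delta^2)T=T^*T-A/\delta^2=G^2+\Delta_T$, whereas $\Lambda^*\Lambda=G^2+A$, so that $\Lambda^*\Lambda-(GT)^*(GT)=A-\Delta_T\ge0$. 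Hence the required estimate is nothing but the Treil--Volberg inequality $\Delta_T\le A$, and the contraction $R=(C\ E)$ exists. I expect the main obstacle to be precisely this matching: choosing the $\delta$-normalisation of $B$ and $G$ so that the lone extension identity $CG+EB=GT$ is solvable by a \emph{contractive} row, which forces the estimate to reduce exactly to $\Delta_T\le A$. Once the key identity $(GT)^*(GT)=G^2+\Delta_T$ is secured, the contractive completion producing $C,E$, and then the isometries $J_C,J_E$, is routine.
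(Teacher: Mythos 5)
Your argument is correct, and it takes a genuinely different route from the paper's. The paper reaches \eqref{eq24} through the $2$-isometry machinery: it takes the minimal $2$-isometric lifting $S_0$ of Theorem \ref{te21}(ii), extends it to a Brownian unitary $\widetilde{S}$ with covariance $\delta$ via Agler--Stankus (\cite[Theorem 5.80 and Proposition 5.12]{AS2}), defines $\widetilde{T}$ by $\widetilde{T}^*=\widetilde{S}^*|_{\h\oplus\ka_1}$, and then derives \eqref{eq25} by decomposing the isometries $\widetilde{V}$, $\widetilde{E}$ (which satisfy $\widetilde{V}^*\widetilde{E}=0$) and taking polar decompositions $C'=J_CD_C$, $E'=J_ED_E$, so that $J_C,J_E$ land concretely in $\n(S_+^*)$. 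You instead work directly from the Treil--Volberg data $(B,V',A)$ underlying Theorem \ref{te21}(ii): you dilate $V'$ to a unitary $U$, encode the extension property in the single identity $CG+EB=GT$ via the isometric embedding $j=\binom{G}{\beta}$, solve it by a Douglas-type contractive row $R=(C\;\,E)$ --- the needed estimate reducing, exactly as you say, to $A-\Delta_T\ge 0$ via the identity $(GT)^*(GT)=G^2+\Delta_T$ --- and then observe that \eqref{eq25} is \emph{equivalent} to $(C\;\,E)$ being a row contraction, through the standard positive $2\times 2$ completion (Shmul'yan-type) factorization $-C^*E=D_C\Gamma D_E$. I checked the computations and they are sound: $G^2+\beta^*\beta=I$ follows from $\|A\|=\delta^2$; $U\beta=\beta T$ from $UB=V'B=BT$; and the three equivalences in your completion lemma are the classical ones. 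What your route buys: it bypasses the Brownian unitary extension theorem entirely, so the proof is essentially self-contained given Treil--Volberg, and it isolates the precise content of \eqref{eq25} as row-contractivity of $(C\;\,E)$, a point the paper leaves implicit. What the paper's route buys: $\widetilde{T}$ is exhibited as a corner of an honest Brownian unitary with covariance exactly $\delta$, which ties Proposition \ref{pr22} to McCullough's model and to the lifting results of Section 2. One cosmetic point you should make explicit: your $\widetilde{T}$ extends $T$ only after identifying $\h$ with $j(\h)\subset\m$; since $j$ is an isometry, a routine unitary conjugation converts this into a literal extension on a space $\m\supset\h$, as the statement requires.
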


\begin{proof}
Clearly, one can assume $T$ non-isometric, i.e. $\delta=\|\Delta_T\|^{1/2}>0$. Let $S_0$ be the 2-isometric lifting of $T$ with ${\rm cov(S_0)}=\delta$ given by \eqref{eq23} on $\ka_0=\h^{\perp}\oplus \h$, and let $\widetilde{S}$ be a Brownian unitary extension of the 2-isometry $S_0$ on $\ka=\ka_0\oplus \ka_1$ with ${\rm cov}(\widetilde{S})=\delta$, obtained by \cite[Theorem 5.80]{AS2}. Using that $S_0$ is a lifting of $T$, as well as the canonical representation of $\widetilde{S}$ on $\ka=\n(\Delta_{\widetilde{S}})\oplus \R(\Delta_{\widetilde{S}})$ given by \cite[Proposition 5.12]{AS2}, we obtain $\widetilde{S}$ in the form
$$
\widetilde{S}=
\begin{pmatrix}
S_0 & \star\\
0 & \star
\end{pmatrix}
\begin{bmatrix}
\ka_0 \\
\ka_1
\end{bmatrix}
=
\begin{pmatrix}
S_+ & \star & \star\\
0 & T & \star\\
0 & 0 & \star
\end{pmatrix}
\begin{bmatrix}
\h^{\perp}\\
\h\\
\ka_1
\end{bmatrix}
=
\begin{pmatrix}
\widetilde{V} & \delta \widetilde{E}\\
0 & U
\end{pmatrix}
\begin{bmatrix}
\n(\Delta_{\widetilde{S}})\\
\R(\Delta_{\widetilde{S}})
\end{bmatrix}.
$$
Here $S_+$ is from the matrix of $S_0$ in \eqref{eq23}, $\widetilde{V}$ and $\widetilde{E}$ are isometries with $\n(\widetilde{V}^*)=\R(\widetilde{E})$, $U$ is unitary and $\delta$ is as above. The subspace $\h^{\perp}$ is invariant for $S_0$ and so for $\widetilde{S}$ and $\widetilde{S}|_{\h^{\perp}}=S_0|_{\h^{\perp}}=S_+$. Hence $\h^{\perp} \subset \n(\Delta_{\widetilde{S}})$ (having in view that $\widetilde{S}$ is a 2-isometry) and $\widetilde{V}|_{\h^{\perp}}=S_+$. Then the operator $\widetilde{T}$ with $\widetilde{T}^*=\widetilde{S}^*|_{\h\oplus \ka_1}$ is an extension of $T$ on $\m=\h\oplus \ka_1$. Clearly, $\widetilde{T}$ has a block matrix on $\m=(\n(\Delta_{\widetilde{S}})\ominus \h^{\perp})\oplus \R(\Delta_{\widetilde{S}})=:\m_0\oplus \m_1$ of the form \eqref{eq24}, where $C=P_{\m_0}\widetilde{V}|_{\m_0}$, $E=P_{\m_0}\widetilde{E}|_{\m_1}$ and $U,\delta$ are as above.

For the condition \eqref{eq25} we decompose $\widetilde{V}$ on $\h^{\perp}\oplus \m_0$ and $\widetilde{E}:\R(\Delta_{\widetilde{S}})\to \h^{\perp} \oplus \m_0$ as
$$
\widetilde{V}=
\begin{pmatrix}
S_+ & C'\\
0 & C
\end{pmatrix},
\quad \widetilde{E}=
\begin{pmatrix}
E'\\
E
\end{pmatrix}.
$$
Here $C,C',E,E'$ are contractions ($C,E$ as above) with $S_+^*C'=0$, $C'^*C'=D_C^2$ and $E'^*E'=D_E^2$ (because $\widetilde{V}$ and $\widetilde{E}$ are isometries). In addition, as $\widetilde{V}^*\widetilde{E}=0$, it follows that $S_+^*E'=0$ and $C'^*E'+C^*E=0$. Thus, using the polar decomposition, we have $C'=J_CD_C$, $E'=J_ED_E$, where $J_C:\D_C\to \n(S_+^*)$ and $J_E:\D_E\to \n(S_+^*)$ are isometries with $\R(J_C)=\overline{\R(C')}$ and $\R(J_E)=\overline{\R(E')}$. We used here that $\R(C')\cup \R(E') \subset \n(S_+^*)$. Then the above condition becomes $D_CJ_C^*J_ED_E+C^*E=0$, i.e. the condition \eqref{eq25}. The proof is complete.
\end{proof}

Note that in \cite{McC} a Brownian extension
for concave operators $T$ with $\|T\|\le \sqrt{2}$ was obtained in a different way. In fact, the Brownian operators from \cite{McC} are concave, and (up to a normalization) they do model all concave operators. Concerning the operator $\widetilde{T}$ in \eqref{eq24}, it is easy to see that it is concave if and only if $C$ is an isometry with $C^*E=0$ and $U^*E^*EU\le E^*E$.

Next we turn to the class of concave operators $T$ which are $\Delta_T$-regular, that is satisfy the condition \eqref{eq13}. We have the following characterizations for these operators.

\begin{theorem}\label{te23}
For a non-isometric concave operator $T\in \B$ the following statements are equivalent:
\begin{itemize}
\item[(i)] $T$ is $\Delta_T$-regular;
\item[(ii)] $T$ has on the decomposition $\h=\n(\Delta_T)\oplus \overline{\R(\Delta_T)}$ the representation
\begin{equation}\label{eq26}
T=
\begin{pmatrix}
V & \sigma Z\\
0 & \widehat{T}
\end{pmatrix}
\end{equation}
where $V$ is an isometry, $Z$ is an injective contraction with $V^*Z=0$ and $\sigma^2=\|\Delta_T\|+1$, while $\widehat{T}$ is a contraction which commutes with $\sigma ^2 Z^*Z+\Delta_{\widehat{T}}$;
\item[(iii)] $T$ has a 2-isometric lifting $S$ on $\ka \supset \h$ with $S^*S \h \subset \h$ and $\n(\Delta_T)\subset \n(\Delta_S)$, such that $\sigma^{-2} \Delta_S$ is an orthogonal projection.
\end{itemize}
\end{theorem}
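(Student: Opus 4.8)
The plan is to fix the orthogonal decomposition $\h=\n(\Delta_T)\oplus\overline{\R(\Delta_T)}$, set $\widetilde\Delta:=\Delta_T|_{\overline{\R(\Delta_T)}}$ (an injective positive operator), and carry out everything in the $2\times2$ block form $T=\begin{pmatrix}A&B\\C&D\end{pmatrix}$. Since $\n(\Delta_T)$ reduces the selfadjoint $\Delta_T$, we have $\Delta_T=0\oplus\widetilde\Delta$ and $\Delta_T^{1/2}=0\oplus\widetilde\Delta^{1/2}$, so a direct computation gives $\Delta_TT=\begin{pmatrix}0&0\\ \widetilde\Delta C&\widetilde\Delta D\end{pmatrix}$ and $\Delta_T^{1/2}T\Delta_T^{1/2}=\begin{pmatrix}0&0\\0&\widetilde\Delta^{1/2}D\widetilde\Delta^{1/2}\end{pmatrix}$. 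Comparing entries and using injectivity of $\widetilde\Delta$ (hence of $\widetilde\Delta^{1/2}$), condition \eqref{eq13} is \emph{equivalent} to $C=0$ together with $\widetilde\Delta^{1/2}D=D\widetilde\Delta^{1/2}$, i.e. $D$ commutes with $\widetilde\Delta$. This is the content of (i)$\Leftrightarrow$(ii): once $C=0$, reading off $\Delta_T=T^*T-I=0\oplus\widetilde\Delta$ forces $A^*A=I$ (so $A=:V$ is an isometry), $V^*B=0$ and $\widetilde\Delta=B^*B+\Delta_D$. With $\sigma^2=\|\Delta_T\|+1$, $Z:=\sigma^{-1}B$ and $\widehat T:=D$, the concavity inequality $T^*\Delta_TT\le\Delta_T$ reduces to $\widehat T^*\widetilde\Delta\widehat T\le\widetilde\Delta$, which together with the commutation yields $\widetilde\Delta^{1/2}\Delta_{\widehat T}\widetilde\Delta^{1/2}\le0$, hence $\Delta_{\widehat T}\le0$ (the pair $\Delta_{\widehat T},\widetilde\Delta$ commutes and $\widetilde\Delta$ is injective). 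Then $\sigma^2Z^*Z=\widetilde\Delta-\Delta_{\widehat T}\ge\widetilde\Delta$ shows $Z$ is an injective contraction with $V^*Z=0$, and the commutation is exactly ``$\widehat T$ commutes with $\sigma^2Z^*Z+\Delta_{\widehat T}=\widetilde\Delta$''. Reversing these steps gives the converse.

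For (ii)$\Rightarrow$(iii) I would exhibit an explicit lifting $S=\begin{pmatrix}W&X\\0&T\end{pmatrix}$ on $\ka=\el\oplus\h$. First build, as behind Theorem \ref{te21}(i), a $2$-isometry $W$ on $\el=\h_1\oplus\h_0$ from two unilateral shifts $S_+^1,S_+^0$ coupled by an isometry $\sigma E_0:\h_0\to\h_1$, so that $\Delta_W=\sigma^2P_{\h_0}$ (the identity $W^*\Delta_WW=\Delta_W$ being immediate from $S_+^{1*}E_0=0$). Next define $X$ to vanish on $\n(\Delta_T)$ and, on $\overline{\R(\Delta_T)}$, to send the defects $\sigma D_Z$ and $D_{\widehat T}$ isometrically into the $0$-th coordinates of $\h_1$ and $\h_0$ respectively. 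The algebraic identity $\sigma^2I-\widetilde\Delta=\sigma^2D_Z^2+D_{\widehat T}^2$ (immediate from $\widetilde\Delta=\sigma^2Z^*Z+\Delta_{\widehat T}$) then gives $X^*X=0\oplus(\sigma^2I-\widetilde\Delta)$ and $X^*P_{\el}X=0\oplus D_{\widehat T}^2$, while the placement into distinct coordinates forces $W^*X=0$. Consequently $\Delta_S=\Delta_W\oplus(X^*X+\Delta_T)=\sigma^2\bigl(P_{\h_0}\oplus P_{\overline{\R(\Delta_T)}}\bigr)$ is $\sigma^2$ times an orthogonal projection, $S^*S\h\subset\h$ (exactly $W^*X=0$) and $\n(\Delta_T)\subset\n(\Delta_S)$ (exactly $X|_{\n(\Delta_T)}=0$); the $2$-isometry identity collapses on $\overline{\R(\Delta_T)}$ to the trivial $D_{\widehat T}^2+\widehat T^*\widehat T=I$ plus $W^*\Delta_WW=\Delta_W$.

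For (iii)$\Rightarrow$(i) I would run this in reverse. The hypothesis $S^*S\h\subset\h$ forces the off-diagonal block $W^*X$ of $S^*S$ to vanish, so $\Delta_S=\Delta_W\oplus(X^*X+\Delta_T)$ is block diagonal; $\n(\Delta_T)\subset\n(\Delta_S)$ forces $X|_{\n(\Delta_T)}=0$; and $\sigma^{-2}\Delta_S$ a projection forces $X^*X+\Delta_T=\sigma^2Q$ with $Q$ a projection. Here the choice $\sigma^2=\|\Delta_T\|+1>\|\widetilde\Delta\|$ is decisive: if $\langle Qx,x\rangle=0$ then $\langle\widetilde\Delta x,x\rangle=0$, and since $\widetilde\Delta$ is injective the compression of $Q$ to $\overline{\R(\Delta_T)}$ is injective, hence equals the identity there, giving $X^*X=0\oplus(\sigma^2I-\widetilde\Delta)$ and $P_0=P_{\overline{\R(\Delta_T)}}$. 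The $\h$-diagonal block of $S^*\Delta_SS=\Delta_S$ now reads $X^*P_{\el}X+T^*P_0T=P_0$; its $\n(\Delta_T)$-entry is $C^*C=0$, whence $C=0$, and $D=\widehat T$ is a contraction, so the representation \eqref{eq26} is in place.

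The step I expect to be the main obstacle is the \emph{last} one in (iii)$\Rightarrow$(i): extracting the commutation of $\widehat T$ with $\widetilde\Delta$ (equivalently, all of \eqref{eq13}) rather than merely $C=0$. The diagonal blocks of $S^*\Delta_SS=\Delta_S$ deliver only the norm identities $X^*X=0\oplus(\sigma^2I-\widetilde\Delta)$ and $X^*P_{\el}X=0\oplus D_{\widehat T}^2$, which by themselves are consistent with a non-commuting $\widehat T$; the commutation must be squeezed out of the mixed relations $W^*X=0$ and $W^*P_{\el}X=0$ together with $W^*\Delta_WW=\Delta_W$. I expect to need the lifting in its minimal form $\ka=\bigvee_{n\ge0}S^n\h$, so that $W$ acts on the subspace generated by $X\h$ and its action is thereby tied to $\widetilde\Delta$. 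This coupling between the shift part $W$ and the covariance $\widetilde\Delta$ is where the real work lies, and it is the part of the argument I would scrutinize most carefully.
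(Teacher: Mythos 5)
Your proof of (i)$\Leftrightarrow$(ii) is correct and essentially the paper's argument; the only cosmetic difference is that you extract $C=0$ from the $(2,1)$ entry of the regularity identity \eqref{eq13}, while the paper obtains the invariance of $\n(\Delta_T)$ directly from concavity. Your construction for (ii)$\Rightarrow$(iii) is also sound but genuinely different from the paper's: you recycle the two coupled shifts of Theorem \ref{te21}(i) and embed the defects $\sigma D_Z$, $D_{\widehat T}$ into fresh zero-th coordinates, whereas the paper forms the Sz.-Nagy minimal isometric lifting $\widehat V$ of $\widehat T$ and an isometric column $\widehat Z$ with $V_1^*\widehat Z=0$, $V_1=S_+^0\oplus V$, arriving at the Brownian-type lifting \eqref{eq27}. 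Both routes give $\Delta_S=\sigma^2$ times a projection with the required kernel condition, so this leg checks out.

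The genuine gap is the one you flag yourself: (iii)$\Rightarrow$(i) is not proved, and it is the whole content of that implication. Your fallback, passing to the minimal part of the lifting, is not available: (iii) supplies only \emph{some} lifting with the stated properties, and restricting to $\ka'=\bigvee_{n\ge0}S^n\h$ destroys the hypotheses, since $\Delta_{S|_{\ka'}}=P_{\ka'}\Delta_S|_{\ka'}$ and a compression of an orthogonal projection is in general not a projection. Worse, your own work shows the gap cannot be closed by more algebra on the relations you isolated. Condition (iii) yields exactly $C=0$, $X^*X=0\oplus(\sigma^2I-\widetilde\Delta)$, $X^*P_0X=0\oplus D_{\widehat T}^2$, $W^*X=0$ and $W^*P_0X=0$ (with $P_0=\sigma^{-2}\Delta_W$), and these are insensitive to whether $\widehat T$ commutes with $\widetilde\Delta$; tellingly, your (ii)$\Rightarrow$(iii) construction never uses the commutation hypothesis and runs verbatim for any concave $T$ whose compression $\widehat T$ is a contraction. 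Concretely, take $\widetilde\Delta=\mathrm{diag}(1,\tfrac12)$, $\widehat Te_1=0$, $\widehat Te_2=\tfrac12e_1$ (so $\widehat T^*\widetilde\Delta\widehat T\le\widetilde\Delta$ but $\widehat T\widetilde\Delta\ne\widetilde\Delta\widehat T$), $F$ with $F^*F=\widetilde\Delta+D_{\widehat T}^2$ and $V$ a shift with $V^*F=0$: this is a non-regular concave $T$ for which your recipe still manufactures a lifting with every property listed in (iii). So no completion along your lines exists; whatever proves (iii)$\Rightarrow$(i) must exploit more than these consequences.

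For comparison, the paper closes (iii)$\Rightarrow$(i) by showing $\n(\Delta_T)=\n(\Delta_{S_1})\cap\h$ and $\overline{\R(\Delta_T)}=\R(\Delta_{S_1})\cap\h$, hence $\Delta_{S_1}=\sigma^2P_0\oplus\sigma^2P_T$ with $P_T=P_{\overline{\R(\Delta_T)}}$, then noting that $T$ is a $P_T$-regular $P_T$-contraction with compression $T_0$, and asserting the decisive commutation $T_0\Delta_0=\Delta_0T_0$ in a single sentence from the identification $\sigma^2P_T=\Delta_{S_1}|_{\h}$. That is exactly the step you promised to scrutinize, and the scrutiny is warranted: at that point the paper has recorded only the same norm identities you found insufficient (indeed $\Delta_{S_1}|_{\h}=X^*X+\Delta_T=\sigma^2P_T$, which is not $0\oplus\Delta_0$ as written there). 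In short, your attempt improves the exposition of (i)$\Leftrightarrow$(ii) and offers a valid alternative for (ii)$\Rightarrow$(iii), but it contains no proof of (iii)$\Rightarrow$(i), and the example above shows that the implication cannot be recovered from the data you (or, at that juncture, the paper) put on the table.
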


\begin{proof}
Assume that $T$ is $\Delta_T$-regular. Since $\delta:=\|\Delta_T\|^{1/2}>0$ and $T^*\Delta_T T\le \Delta_T$, the subspace $\n(\Delta_T)$ is invariant for $T$. Therefore $T$ has a block matrix of the form \eqref{eq26} with $V$ an isometry and two appropriate operators $Z, \widehat{T}$, and some scalar $\sigma>0$ which will be determined. As $\Delta_T\ge 0$, we obtain by \eqref{eq26} that $V^*Z=0$ and so $\Delta_T=0\oplus \Delta_0$, where $\Delta_0=\sigma^2Z^*Z+\Delta_{\widehat{T}}$ is an injective positive operator. It follows from the assertion (i) that $\Delta_0\widehat{T}=\Delta_0^{1/2}\widehat{T}\Delta_0^{1/2}$, which means (by the injectivity of $\Delta_0$) that $\Delta_0^{1/2}\widehat{T}=\widehat{T}\Delta_0^{1/2}$. Also, from $T^*\Delta_TT\le \Delta_T$ one obtains $\widehat{T}^*\Delta_0\widehat{T}\le \Delta_0$. This inequality together with the previous equality imply that we have
$$
\|\widehat{T}\Delta_0^{1/2}k\|=\|\Delta_0^{1/2}\widehat{T}k\|\le \|\Delta_0^{1/2}k\|
$$
for $k\in \overline{\R(\Delta_T)}$. Hence $\widehat{T}$ is a contraction. On the other hand, if $D_{\widehat{T}}$ is the defect operator of $\widehat{T}$, we have $\sigma^2Z^*Z=\Delta_0+D_{\widehat{T}}^2$. We infer $\sigma^2\|Z\|^2\le \|\Delta_0\|+1=\delta^2+1$, so choosing $\sigma=\sqrt{\delta^2+1}$ we get that $Z$ is a contraction. Also, one has $\n(Z)=\n(\Delta_0) \cap \n(D_{\widehat{T}})=\{0\}$. Therefore $Z$ is injective and the statement (ii) is proved. Hence (i) implies (ii).

Suppose now that $T$ has the form \eqref{eq26}. Let $\widehat{V}$ be the minimal isometric lifting of $\widehat{T}$ on $\ka_1=\overline{\R(\Delta_T)} \oplus l_+^2(\D_{\widehat{T}})=:\h_1\oplus \h_2$ (see \cite{FF, SFb}). Consider the operators
$$
Z_0:\h_1 \oplus \h_2 \to \n(\Delta_T)=:\h_0, \quad Z_0= \begin{pmatrix} \sigma Z & 0 \end{pmatrix}
$$
and
$$
Z_1:\h_1 \oplus \h_2 \to \ka_0:= l_+^2(\D_Z\oplus \h_2), \quad Z_1= \sigma \begin{pmatrix} J_1D_Z & J_2 \end{pmatrix},
$$
where $Z$ is the contraction from \eqref{eq26}, while $J_1: \D_Z\to \ka_0$ and $J_2: \h_2 \to \ka_0$ are the corresponding embedding mappings. Define the operator $S_1$ on $\ka=\ka_0 \oplus \h_0 \oplus \ka_1=(\ka_0 \oplus \h_0)\oplus \ka_1$ with the corresponding block matrices
$$
S_1=
\begin{pmatrix}
S_+^0 & 0 & Z_1\\
0 & V & Z_0\\
0 & 0 & \widehat{V}
\end{pmatrix}
=\begin{pmatrix}
V_1 & \sigma \widehat{Z}\\
0 & \widehat{V}
\end{pmatrix}.
$$
Here $S_+^0$ is the forward shift on $\ka_0$ and $V$ is as in \eqref{eq26}, while $V_1=S_+^0\oplus V$ on $\ka_0 \oplus \h_0$ and the operator $\widehat{Z}: \h_1\oplus \h_2\to \ka_0 \oplus \h_0$ is given by the column matrix $\widehat{Z}=\sigma^{-1} \begin{pmatrix} Z_1 & Z_0 \end{pmatrix} ^{\rm tr}$.

It is easy to see that $\widehat{Z}$ is an isometry and $V_1^*\widehat{Z}=0$ because $V^*Z=0$ in \eqref{eq26}. As $V_1$ and $\widehat{V}$ are isometries too, it follows that $S_1$ is a 2-isometry with
$\Delta_{S_1}=0\oplus \sigma ^2 I=\sigma ^2 P,$ where $P$ is the orthogonal projection onto $\ka_1 =\R(\Delta_{S_1})$. To see that $S_1$ is a lifting of $T$, we write explicitly $\widehat{V}$ in the above $3\times 3$ matrix of $S_1$. So, by using \eqref{eq26}, we obtain the representation
\begin{equation}\label{eq27}
S_1=
\begin{pmatrix}
W & X\\
0 & T
\end{pmatrix}
\begin{bmatrix}
\ka_0\oplus \h_2\\
\h_0 \oplus \h_1
\end{bmatrix},
\quad W=
\begin{pmatrix}
S_+^0 & \sigma J_2\\
0 & S_+^1
\end{pmatrix}
\begin{bmatrix}
\ka_0\\
\h_2
\end{bmatrix},
\quad X=
\begin{pmatrix}
0 & \sigma J_1D_Z\\
0 & \widetilde{D}_{\widehat{T}}
\end{pmatrix}
\begin{bmatrix}
\h_0\\
\h_1
\end{bmatrix},
\end{equation}
where $S_+^1$ is the forward shift on $\h_2$ and $\widetilde{D}_{\widehat{T}}=\widehat{J}D_{\widehat{T}}$ with $\widehat{J}$ the embedding mapping of $\D_{\widehat{T}}$ into $\h_2$.
This shows that $S_1$ is a lifting of $T$ with $W^*X=0$ because $J_2^*J_1D_Z=0$, $\R(J_1)$ and $\R(J_2)$ being orthogonal in $\ka_0$. Thus on $\ka=\ka_0 \oplus \h_2\oplus \h_0\oplus \h_1$ one obtains
$$
S_1^*S_1=W^*W\oplus (X^*X+\Delta_T)=I_{\ka_0}\oplus (\sigma^2+1)I_{\h_2} \oplus I_{\h_0}\oplus (\sigma ^2+1)I_{\h_1}.
$$
Hence $\n(\Delta_T)=\h_0 \subset \n(\Delta_{S_1})$ and $\overline{\R(\Delta_T)}=\h_1\subset \R(\Delta_{S_1})$. In conclusion, $S_1$ has the properties from (iii). Therefore (ii) implies (iii).

Finally, we assume that there exists a lifting $S_1$ for $T$ as in (iii). So $S_1$ has the form \eqref{eq27} for some 2-isometry $W$ on $\h^{\perp}=\ka \ominus \h$ and an operator $X:\h\to \h^{\perp}$ with $W^*X=0$. Therefore $\Delta_{S_1}=\Delta_W\oplus (X^*X+\Delta_T)=\sigma^2P$ and $P=P_{\R(\Delta_{S_1})}$. Since $(\sigma^{-2}\Delta_{S_1})^2=\sigma^{-2}\Delta_{S_1}$, we infer $\Delta_W^2=\sigma^2\Delta_W$. Therefore $\Delta_W=\sigma^2P_0$, where $P_0=P_{\R(\Delta_W)}$. We used here that $W$ is a $2$-isometry, $\Delta_W\ge 0$, and that $\n(\Delta_W)\subset \n(\Delta_{S_1})$.

On the other hand, we always have $\n(\Delta_{S_1}) \cap \h \subset \n(\Delta_T)$ because
$$
0\le \langle \Delta_T h,h \rangle=\langle S_1^*P_{\h}S_1h,h\rangle -\|h\|^2\le \langle \Delta_{S_1}h,h \rangle
$$
for $h\in \h$. Since we have also $\n(\Delta_T)\subset \n(\Delta_{S_1})$  from (iii), it follows that $\n(\Delta_T)=\n(\Delta_{S_1})\cap \h= \n(X^*X+\Delta_T)$. Therefore $\n(\Delta_T)$ is invariant for $S_1$. On the other hand, the above inequality $\Delta_T \le \Delta_{S_1}|_{\h}$ gives $\R(\Delta_T)\subset \Delta_{S_1}\h \subset \R(\Delta_{S_1})\cap \h$ which by the previous equality becomes $\overline{\R(\Delta_T)}=\R(\Delta_{S_1})\cap \h=\overline{\R(X^*X+\Delta_T)}$. Thus $\overline{\R(\Delta_T)}$ reduces $S_1^*S_1$ and one has $\Delta_{S_1}=\sigma^2P_0\oplus \sigma^2 P_T$, where $P_T\in \B$ is the orthogonal projection onto $\overline{\R(\Delta_T)}$. In addition, since $S_1$ is a 2-isometry, $T$ as a compression of $S_1$ to $\h$ is a $P_T$-contraction. As $\n(\Delta_T)$ is invariant for $T$ one also has the relation $P_TT=P_TTP_T$, i.e. $T$ is $P_T$-regular.

Now by the inequality $T^*P_TT\le P_T$ there exists a contraction $T_0$ on $\R(P_T)=\overline{\R(\Delta_T)}$ satisfying the relation $T_0P_T=P_TT$. So $T_0$ is even the compression of $T$ to $\overline{\R(\Delta_T)}$. Then the usual representation of the concave operator $T$ on $\h=\n(\Delta_T)\oplus \overline{\R(\Delta_T)}$ is
$$
T=
\begin{pmatrix}
V & F\\
0& T_0
\end{pmatrix},
$$
with $V$ an isometry and some operator $F$ satisfying the condition $V^*F=0$ (as $\Delta_T\ge 0$). By $P_T$-regularity of $T$ we have $T_0P_Th=P_TTh=P_TT_0P_Th$ whence $T_0P_Th=P_TT_0h$ for $h\in \overline{\R(\Delta_T)}$. Since $\sigma ^2 P_T={\Delta_{S_1}}|_{\h}=0\oplus \Delta_0$, where $\Delta_0=F^*F+\Delta_{T_0}={\Delta_T}|_{\overline{\R(\Delta_T)}}$, it follows that $T_0\Delta_0=\Delta_0T_0$. Finally, as $\Delta_T=0\oplus \Delta_0$, we infer that $\Delta_TT=\Delta_{T}^{1/2}T\Delta_{T}^{1/2}$, i.e. $T$ is $\Delta_T$-regular. Thus we proved that (iii) implies (i), and this ends the proof.
\end{proof}

As a consequence, we re-obtain the equivalence of (i) with (ii) for a 2-isometry given in \cite[Proposition 5.1]{MMS}; see also \cite[Theorem 7.1]{ACJS}.
Notice that in the terminology of \cite{ACJS} a $\Delta_T$-regular 2-isometry $T$ is called a {\it quasi-Brownian isometry}.

\begin{corollary}\label{co24}
A non-isometric operator $T\in \B$ is a $\Delta_T$-regular 2-isometry if and only if $T$ has on $\h=\n(\Delta_T)\oplus \overline{\R(\Delta_T)}$ a representation \eqref{eq26} with $V$ and $\widehat{T}$ isometries, $Z$ an injective contraction such that $V^*Z=0$, $\widehat{T}Z^*Z=Z^*Z\widehat{T}$ and $\sigma^2=\|\Delta_T\|$.
\end{corollary}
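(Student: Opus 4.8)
The plan is to read this off Theorem \ref{te23}, exploiting that a $2$-isometry is precisely a concave operator for which the inequality $T^*\Delta_TT\le\Delta_T$ is an equality; so I only need to feed the extra equation $T^*\Delta_TT=\Delta_T$ into the representation \eqref{eq26}. For the forward implication, start from a non-isometric $\Delta_T$-regular $2$-isometry $T$. Being a $\Delta_T$-regular concave operator, $T$ admits by Theorem \ref{te23}, (i)$\Rightarrow$(ii), the block form \eqref{eq26} with $V$ an isometry, $Z$ an injective contraction, $V^*Z=0$, and $\widehat{T}$ a contraction commuting with the injective positive operator $\Delta_0=\sigma^2Z^*Z+\Delta_{\widehat{T}}$, where $\Delta_T=0\oplus\Delta_0$. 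Since $\widehat{T}$ commutes with $\Delta_0$ it commutes with $\Delta_0^{1/2}$. A short block computation gives $T^*\Delta_TT=0\oplus\widehat{T}^*\Delta_0\widehat{T}$, so the $2$-isometry condition is equivalent to $\widehat{T}^*\Delta_0\widehat{T}=\Delta_0$; using $\Delta_0^{1/2}\widehat{T}=\widehat{T}\Delta_0^{1/2}$ this rewrites as $\Delta_0^{1/2}(I-\widehat{T}^*\widehat{T})\Delta_0^{1/2}=0$, i.e. $\Delta_0^{1/2}D_{\widehat{T}}^2\Delta_0^{1/2}=0$, whence $D_{\widehat{T}}\Delta_0^{1/2}=0$. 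As $\Delta_0$ is injective, $\Delta_0^{1/2}$ has dense range and therefore $D_{\widehat{T}}=0$, i.e. $\widehat{T}$ is an isometry.

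Once $\widehat{T}$ is an isometry one has $\Delta_{\widehat{T}}=0$, so $\Delta_0=\sigma^2Z^*Z$ and the commutation of $\widehat{T}$ with $\Delta_0$ becomes $\widehat{T}Z^*Z=Z^*Z\widehat{T}$. The remaining point is the normalization of $\sigma$: Theorem \ref{te23} is stated with $\sigma^2=\|\Delta_T\|+1$ (a bound forced by the presence of $D_{\widehat{T}}^2$ in the general concave case), but now the off-diagonal block $F:=\sigma Z$ of $T$ satisfies $F^*F=\Delta_0=\Delta_T|_{\overline{\R(\Delta_T)}}$, so $\|F\|=\|\Delta_T\|^{1/2}$ and $F$ is injective. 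Rewriting $F=\sigma Z$ with $\sigma:=\|F\|=\|\Delta_T\|^{1/2}$ and $Z:=F/\sigma$ (still injective, now of norm $1$) yields the stated representation with $\sigma^2=\|\Delta_T\|$, the commutation being unchanged because $Z^*Z$ is a positive multiple of $\Delta_0$.

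For the converse I would simply compute. Assuming \eqref{eq26} with $V,\widehat{T}$ isometries, $V^*Z=0$ and $\widehat{T}Z^*Z=Z^*Z\widehat{T}$, the block multiplication gives $T^*T=I\oplus(\sigma^2Z^*Z+I)$, hence $\Delta_T=0\oplus\sigma^2Z^*Z=:0\oplus\Delta_0$, which is injective on the second summand because $Z$ is injective; thus the displayed splitting is indeed $\n(\Delta_T)\oplus\overline{\R(\Delta_T)}$ and $\|\Delta_T\|=\sigma^2$. Then $T^*\Delta_TT=0\oplus\widehat{T}^*\Delta_0\widehat{T}$, and since $\widehat{T}$ commutes with $\Delta_0$ and is an isometry, $\widehat{T}^*\Delta_0\widehat{T}=\widehat{T}^*\widehat{T}\Delta_0=\Delta_0$, so $T^*\Delta_TT=\Delta_T$ and $T$ is a $2$-isometry. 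Likewise $\widehat{T}$ commutes with $\Delta_0^{1/2}$, so $\Delta_0^{1/2}\widehat{T}\Delta_0^{1/2}=\Delta_0\widehat{T}$, and comparing $\Delta_TT=0\oplus\Delta_0\widehat{T}$ with $\Delta_T^{1/2}T\Delta_T^{1/2}=0\oplus\Delta_0^{1/2}\widehat{T}\Delta_0^{1/2}$ gives \eqref{eq13}, i.e. $T$ is $\Delta_T$-regular.

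I expect the only genuinely delicate step to be the forward passage from $\widehat{T}^*\Delta_0\widehat{T}=\Delta_0$ to ``$\widehat{T}$ is an isometry'', where the injectivity of $\Delta_0$ (equivalently, the dense range of $\Delta_0^{1/2}$) is essential; the careful bookkeeping of the normalizing scalar $\sigma$, which drops from $\|\Delta_T\|+1$ to $\|\Delta_T\|$ exactly because $\widehat{T}$ becomes an isometry and $D_{\widehat{T}}$ disappears, is the other point one must not overlook, all the rest being routine block-matrix algebra.
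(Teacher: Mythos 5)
Your proposal is correct and follows exactly the route the paper intends: Corollary \ref{co24} is stated there as a direct consequence of Theorem \ref{te23} with no written proof, and your argument supplies precisely the missing details --- specializing the representation \eqref{eq26} via the equality $T^*\Delta_TT=\Delta_T$, using the injectivity of $\Delta_0$ and the commutation $\widehat{T}\Delta_0^{1/2}=\Delta_0^{1/2}\widehat{T}$ to force $D_{\widehat{T}}=0$, and renormalizing $\sigma$ from $(\|\Delta_T\|+1)^{1/2}$ to $\|\Delta_T\|^{1/2}$ once $\Delta_{\widehat{T}}$ vanishes (legitimate since $T$ non-isometric gives $\|\Delta_T\|>0$). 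All block computations, including the converse verification of the $2$-isometry identity and of \eqref{eq13}, check out.
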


\begin{remark}\label{re25}
\rm
In order to compare the 2-isometric liftings obtained in Theorem \ref{te23}, (iii) and in Theorem \ref{te21}, (i), we record the following remarks. For the 2-isometric lifting $S_1$ obtained in (the proof of) Theorem \ref{te23}, (iii), the subspaces $\n(\Delta_T)$ and $\overline{\R(\Delta_T)}$ reduce $S_1^*S_1$ (not just $\h$), while the covariance of $S_1$ is less or equal to the covariance of the lifting from Theorem \ref{te21}, (i). It is easy to see that if the concave operator $T$ possess a 2-isometric lifting $S$ which (only) satisfies the property that $\beta^{-2}\Delta_S$ is an orthogonal projection for some scalar $\beta >0$, then $\R(\Delta_T) \subset \R(\Delta_S)$. Also, $\overline{\R(\Delta_T)}$ is invariant for $S^*$ and $T_*:=T^*|_{\overline{\R(\Delta_T)}}=S^*|_{\overline{\R(\Delta_T)}}$ is a contraction. But this does not ensure that $T_*$ commutes to $\Delta_0:={\Delta_T}|_{\overline{\R(\Delta_T)}}$, in general, so $T$ in not $\Delta_T$-regular in this case.
\end{remark}

On the other hand, for any contraction $T_0$ on $\h$ one can obtain a non-isometric concave lifting $T$ on $\ka=l_+^2(\h)\oplus \h$ which is $\Delta_T$-regular, so of the form \eqref{eq26} with $T_0$ instead of $\widehat{T}$, $V$ a shift operator, and $\sigma Z$ an isometry with $\sigma=1+\|T_0\|^2$. Therefore $\Delta_T$-regularity of $T$ does not force $\widehat{T}$ in \eqref{eq26} to belong to a restrictive class of contractions.

\medskip

\section{The Cauchy dual of a regular concave operator}
\label{Sect:3}
\medskip

By definition, the {\it Cauchy dual} of a left invertible operator $T\in\B$ is the operator $T'=T(T^*T)^{-1}$. Recall (see \cite{Sh}) that if $T$ is an left invertible operator on $\h$, then the operator $T^*T$ is invertible and $T'^*=(T^*T)^{-1}T^*$ is a left inverse of $T$, hence $T'$ is left invertible too. Also, $T'$ is a contraction if $T$ is expansive. It is known from \cite[Theorem 2.9]{Ch} that if $T$ is concave, then $T'$ is hyponormal, i.e. $T'T'^*\le T'^*T'$. This implies that $T'$ is a $D_{T'}^2$-contraction, that is a 2-hypercontraction. Indeed, one has $D_{T'}^2\le D_{T'^*}^2$. Therefore
$$T'^*D_{T'}^2T'\le T'^*D_{T'^*}^2T'=T'^*T'-(T'^*T')^2 \le D_{T'}^2.$$
In this case, according to the terminology from $A$-contractions (see \cite{S-2006, S-2009}) we say that $T'$ is $D_{T'}^2$-{\it regular} if it satisfies the condition $D_{T'}^2T'=D_{T'}T'D_{T'}$.
The next result shows that the mapping $T \to T'$ preserves the regularity condition.

\begin{theorem}\label{te31}
There is a bijective mapping between the set of all $\Delta_T$-regular concave operators $T$ on $\h$ and the set of all left invertible 2-hypercontractions $T'$ on $\h$ which are $D_{T'}^2$-regular, with $\|P_{\D_{T'}}T'h\|\le \|T'^*T'h\|$ for $h\in \D_{T'}$.
\end{theorem}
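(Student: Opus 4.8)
The plan is to exploit that the Cauchy dual is an involution on left-invertible operators and to transport the two sets of defining conditions through it by means of the block structure furnished by Theorem \ref{te23}. First I would record the involutivity: for left-invertible $T$ one has $T'^*T'=(T^*T)^{-1}T^*T(T^*T)^{-1}=(T^*T)^{-1}$, so $T'$ is left invertible, $(T'^*T')^{-1}=T^*T$, and $(T')'=T'(T'^*T')^{-1}=T$. Since both classes in the statement consist of left-invertible operators, it suffices to show that $T\mapsto T'$ maps each $\Delta_T$-regular concave operator into the target class $\mathcal B$, and conversely maps each $S\in\mathcal B$ to a $\Delta_{S'}$-regular concave operator $S'$; involutivity then promotes these two inclusions to a bijection whose inverse is again the Cauchy dual.

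For the forward inclusion I would take $T$ concave and $\Delta_T$-regular and write it in the form \eqref{eq26} on $\h=\n(\Delta_T)\oplus\overline{\R(\Delta_T)}=\h_0\oplus\h_1$. A direct computation gives $T'^*T'=I_{\h_0}\oplus(I+\Delta_0)^{-1}$, hence $D_{T'}^2=0\oplus\Delta_0(I+\Delta_0)^{-1}$ and $\D_{T'}=\h_1$. Then $T'$ is left invertible and, as already observed before the statement (via hyponormality and \cite{Ch}), a $2$-hypercontraction. Since $\widehat T$ commutes with $\Delta_0$, hence with every continuous function of $\Delta_0$, a block computation yields $D_{T'}^2T'=D_{T'}T'D_{T'}$, that is $D_{T'}^2$-regularity. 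Finally, for $h\in\D_{T'}=\h_1$ one has $P_{\D_{T'}}T'h=\widehat T(I+\Delta_0)^{-1}h$ and $T'^*T'h=(I+\Delta_0)^{-1}h$; as $(I+\Delta_0)^{-1}$ is invertible on $\h_1$, the inequality $\|P_{\D_{T'}}T'h\|\le\|T'^*T'h\|$ is equivalent to $\widehat T$ being a contraction, which holds by Theorem \ref{te23}. Thus $T'\in\mathcal B$.

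For the converse I would begin with $S\in\mathcal B$ and reconstruct its block form. Put $\h_0=\n(D_S)$, $\h_1=\D_S$, so that $D_S^2=0\oplus B_0$ with $B_0$ injective on $\h_1$. The $(1,1)$-entry of $S^*D_S^2S\le D_S^2$ reads $c^*B_0c\le0$ for $c=P_{\h_1}S|_{\h_0}$, forcing $c=0$; hence $\h_0$ is $S$-invariant and $S=\begin{pmatrix}a&b\\0&d\end{pmatrix}$. Contractivity of $S$ gives $a^*a=I$, $a^*b=0$, $b^*b+d^*d=I-B_0$, so $S^*S=I_{\h_0}\oplus(I-B_0)$ is block diagonal, while $D_S^2$-regularity gives $B_0d=B_0^{1/2}dB_0^{1/2}$, whence $d$ commutes with $B_0$ by injectivity of $B_0^{1/2}$. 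Setting $T=S'$ one finds $T^*T=(S^*S)^{-1}$, so $\Delta_T=0\oplus\Delta_0$ with $\Delta_0=B_0(I-B_0)^{-1}$ injective and $\widehat T=P_{\h_1}T|_{\h_1}=d(I-B_0)^{-1}$ commuting with $\Delta_0$; this already gives $\Delta_T$-regularity by the computation of Theorem \ref{te23}. It remains to get concavity: the norm condition $\|P_{\D_S}Sh\|\le\|S^*Sh\|$ for $h\in\h_1$ reads $\|dh\|\le\|(I-B_0)h\|$, i.e. $d^*d\le(I-B_0)^2$, and a block computation of $T^*\Delta_TT$ using the commutation of $d$ with $B_0$ shows that $T^*\Delta_TT\le\Delta_T$ is equivalent to this very inequality. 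Hence $S'$ is concave and $\Delta_{S'}$-regular, completing the converse inclusion and, with the involution, the bijection.

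The hard part will be the converse direction: recovering the rigid block structure of $S$ (the invariance of $\n(D_S)$ and the block-diagonality of $S^*S$) from the bare operator inequalities, and recognizing that the seemingly technical hypothesis $\|P_{\D_S}Sh\|\le\|S^*Sh\|$ is exactly the concavity of $S'$ rather than an extraneous assumption. The recurring technical device is the injectivity of $B_0$ (equivalently of $\Delta_0$) on $\h_1$, which upgrades ``commutator-killing'' identities such as $B_0^{1/2}[B_0^{1/2},d]=0$ to genuine commutation and allows the order inequalities to be cancelled against the injective factor $\Delta_0^{1/2}$.
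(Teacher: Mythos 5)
Your proposal is correct and takes essentially the same route as the paper's proof: the same decomposition $\h=\n(\Delta_T)\oplus\overline{\R(\Delta_T)}$ (respectively $\n(D_S)\oplus\D_S$) with the block form from Theorem \ref{te23}, the same computations $T'^*T'=(T^*T)^{-1}$, $D_{T'}^2=0\oplus\Delta_0(I+\Delta_0)^{-1}$, the same extraction of commutation from the regularity identities via injectivity of $\Delta_0$ (resp.\ $B_0$), and the same identification of the norm hypothesis $d^*d\le(I-B_0)^2$ with concavity of the dual. The only difference is organizational: you obtain injectivity of the map for free from the involution $(T')'=T$, whereas the paper checks it by a short direct computation starting from $T'=T_1'$ (and uses the involution implicitly only for surjectivity) --- a mild streamlining, not a different argument.
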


\begin{proof}
The required mapping is given by $\varphi(T) = T'$, where $T'$ is the Cauchy dual of $T$.
Assume firstly that $T$ is a $\Delta_T$-regular concave operator. So by Theorem \ref{te23} (ii), $T$ has the representation \eqref{eq26} with $V$ an isometry, $Z$ and $\widehat{T}$ contractions such that $V^*Z=0$ and $\widehat{T}\Delta=\Delta \widehat{T}$, where $\Delta:=\sigma ^2 Z^*Z+ \widehat{T}^*\widehat{T}$. Since $T^*T=I\oplus \Delta$ on $\n(\Delta_T) \oplus \overline{\R(\Delta_T)}$ and $T^*T$ is invertible it follows that $\Delta$ is invertible too, and so $\widehat{T}\Delta^{-1}=\Delta^{-1}\widehat{T}$.

Now using \eqref{eq26} we get that $T'=T(T^*T)^{-1}$ has on $\h=\n(\Delta_T)\oplus \overline{\R(\Delta_T)}$ the block representation
\begin{equation}\label{eq31}
T'=
\begin{pmatrix}
V & Z'\Delta^{-1}\\
0 & \widehat{T}\Delta^{-1}
\end{pmatrix},
\quad Z'=\sigma Z.
\end{equation}
But $T'$ is a contraction and the representation \eqref{eq31} gives $D_{T'}^2=I-T'^*T'=0\oplus \Delta'$, where
$$
\Delta'=I-\Delta^{-1}Z'^*Z'\Delta^{-1}-\Delta^{-1}\widehat{T}^*\widehat{T}\Delta^{-1}=I-\Delta^{-1}.
$$

Since $\Delta-I=\Delta_T|_{\overline{\R(\Delta_T)}}$ is injective, it follows that $\Delta'=\Delta^{-1}(\Delta-I)$ is injective. Therefore $\n(D_{T'})=\n(\Delta_T)$, so $\overline{\R(D_{T'})}=\overline{\R(\Delta_T)}$. Hence the above representations of $T'$ and $D_{T'}^2$ are given on the decomposition $\h=\n(D_{T'})\oplus \overline{\R(D_{T'})}$. As $T'$ is a $D_{T'}^2$-contraction, the $D_{T'}^2$-regularity of $T'$ will mean that $\widehat{T}\Delta^{-1}\Delta'=\Delta'\widehat{T}\Delta^{-1}$ (taking into account that $\Delta'$ is injective). But this last relation holds because $\widehat{T}$ commutes with both $\Delta^{-1}$ and $\Delta'$, while $\Delta^{-1}$ and $\Delta'$ obviously commute. Hence $T'$ is $D_{T'}^2$-regular. Now denoting $T_0'=\widehat{T}\Delta^{-1}$ we have, for $h\in \D_{T'}$,
$$
\|P_{\D_{T'}}T'h\|=\|T'_0h\|\le \|\Delta^{-1}h\|=\|(I-\Delta ')h\|=\|T'^*T'h\|.
$$
Therefore $T'$ satisfies all the properties from the statement of the theorem, and so the correspondence $T \to T'$ induces a well-defined mapping $\varphi$ between the corresponding sets.

To prove that this map $\varphi$ is onto, let $C$ be a left invertible 2-hypercontraction on $\h$ which is $D_C^2$-regular, with $C_0^*C_0\le D_0^2$, where $C_0:=P_{\D_C}C|_{\D_C}$ and $D_0:=C^*C|_{\D_C}$. Then $\n(D_C)$ is invariant for $C$ and $V:=C|_{\n(D_C)}$ is an isometry ($C$ being a contraction). Therefore $C$ has on the decomposition $\h=\n(D_C)\oplus \D_C$ the block matrix
$$
C=
\begin{pmatrix}
V & C_1\\
0 & C_0
\end{pmatrix}
$$
with $V^*C_1=0$ ($C$ and $V$ being as above). Then $C^*C=I\oplus (C_1^*C_1+C_0^*C_0)=I\oplus D_0$. As $C$ is left invertible, it follows that $C^*C$ is invertible, hence $D_0$ is an invertible contraction. Also, we have $D_C^2=0\oplus (I-D_0)$.

Let $T:=C'=C(C^*C)^{-1}$. Then $T^*T=(C^*C)^{-1}C^*C(C^*C)^{-1}=(C^*C)^{-1}$, whence $T'=T(T^*T)^{-1}=C$. To conclude the surjectivity of the map $\varphi$, we show that $T$ is a $\Delta_T$-regular concave. Firstly, we have $\Delta_T=(C^*C)^{-1}-I=(I-C^*C)(C^*C)^{-1}$. Hence $\n(\Delta_T)=\n(D_C)$ and $\R(\Delta_T)=\R(D_C)$. Thus, using the block matrix of $C$, we obtain the following representation of $T$ on $\h=\n(\Delta_T)\oplus \overline{\R(\Delta_T)}$:
$$
T=
\begin{pmatrix}
V & C_1D_0^{-1}\\
0 & C_0D_0^{-1}
\end{pmatrix}.
$$
Now, since $C$ is $D_C^2$-regular, i.e. $D_C^2C=D_CCD_C$, we infer that
$$
(I-D_0)C_0=(I-D_0)^{1/2}C_0(I-D_0)^{1/2}.
$$
Since $I-D_0={D_C^2}|_{\D_C}$ is injective, we deduce that $(I-D_0)C_0=C_0(I-D_0)$. This also gives $D_0C_0=C_0D_0$ and so $C_0D_0^{-1}=D_0^{-1}C_0$. But from the above block matrix of $T$ we get
$$
\Delta_T=T^*T-I=0\oplus [D_0^{-1}(C_1^*C_1+C_0^*C_0)D_0^{-1}-I]=0\oplus (D_0^{-1}-I).
$$
Then, denoting $\widehat{T}=C_0D_0^{-1}$, we have
$$
\widehat{T}(D_0^{-1}-I)=C_0(D_0^{-1}-I)D_0^{-1}=(D_0^{-1}-I)\widehat{T}.
$$
This relation implies that $\widehat{T}$ is a contraction and $T$ is concave. Indeed, since $C_0^*C_0\le D_0^2$, it follows that $D_0^{-1}C_0^*C_0D_0^{-1}\le I$, that is $\widehat{T}$ is a contraction. So, we get
\begin{eqnarray*}
T^*\Delta_TT&=& 0\oplus D_0^{-1}C_0^*(D_0^{-1}-I)C_0D_0^{-1}\\
&=& 0 \oplus (D_0^{-1}-I)^{1/2}D_0^{-1}C_0^*C_0D_0^{-1}(D_0^{-1}-I)^{1/2}\le 0 \oplus (D_0^{-1}-I)=\Delta_T,
\end{eqnarray*}
that is $T$ is concave. In addition, as $\widehat{T}$ commutes to $D_0^{-1}-I={\Delta_T}|_{\overline{\R(\Delta_T)}}$, we have by Theorem \ref{te23} (i) that $T$ is $\Delta_T$-regular. Thus $T$ has the required properties.

We conclude that the mapping $\varphi$ from the set of $\Delta_T$-regular concave operators $T$ into the set of $D_T^2$-regular 2-hypercontractions $C$ given by $\varphi(T)=T'(=C)$ is surjective, and it remains to show that $\varphi$ is injective. Indeed, let us assume that $T'=T'_1$ for two regular concave operators $T$ and $T_1$. Then $T=T'T^*T=T'_1T^*T$, which gives $T^*T=T^*T'_1T^*T$ so $I=T^*T'_1=T'^*_1T$. Using this and a previous relation, we obtain $I=T'^*_1T'_1T^*T$, whence
$$
(T^*T)^{-1}=T'^*_1T'_1=(T^*_1T_1)^{-1}T_1^*T_1(T^*_1T_1)^{-1}=(T^*_1T_1)^{-1},
$$
that is $T^*T=T^*_1T_1$. Finally, as $T'=T'_1$, we get
$
T=T'T^*T=T'_1T^*_1T_1=T_1.
$
Therefore $\varphi$ is injective. This ends the proof.
\end{proof}

\begin{remark}\label{re32}
\rm
Assume that $T$ is a concave operator. Then $T'$ is $D_{T'}^2$-isometry if and only if $T'$ is an isometry, and in this case $T=T'$. Therefore, even if $T$ is a $\Delta_T$-regular 2-isometry, we do not have more information about $T'$.

Recall that sometimes concave operators are called 2-hyperexpansive. Also, it is a well-known fact that an operator $T$ is $m$-hyperexpansive for $m\ge 2$ if and only if $T$ is $(\Delta_T,m-1)$-hypercontractive (see Section 1 for terminology). Now, assuming $\Delta_T$-regularity, we can express this equivalence in terms of the contraction $\widehat{T}$ from \eqref{eq26}.
\end{remark}

\begin{proposition}\label{pr33}
Let $T\in \B$ be a $\Delta_T$-regular concave operator and let $m\ge 2$ be an integer. Then $T$ is $m$-hyperexpansive if and only if the compression $\widehat{T}$ of $T$ on $\overline{\R(\Delta_T)}$ is a $(m-1)$-hypercontraction.
\end{proposition}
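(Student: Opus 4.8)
The plan is to peel the statement in three layers: reduce $m$-hyperexpansivity of $T$ to a list of weighted positivity conditions, push these conditions down to the corner $\widehat{T}$ via the triangular form \eqref{eq26}, and finally strip off the weight using $\Delta_T$-regularity. I would begin by invoking the equivalence recalled in Remark \ref{re32}: since $T$ is concave, the bottom condition $B_1(T)=-\Delta_T\le 0$ holds automatically, so $T$ is $m$-hyperexpansive if and only if it is $(\Delta_T,m-1)$-hypercontractive, i.e. $B_{\Delta_T}^n(T)\ge 0$ for all $1\le n\le m-1$. At the operator level this rests on the elementary identity $B_{n+1}(T)=-B_{\Delta_T}^n(T)$, obtained by combining $T^{*j}\Delta_TT^j=T^{*(j+1)}T^{j+1}-T^{*j}T^j$ with Pascal's rule. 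It therefore suffices to match the inequalities $B_{\Delta_T}^n(T)\ge 0$ with the inequalities $B_n(\widehat{T})\ge 0$, $1\le n\le m-1$, the latter being precisely $(m-1)$-hypercontractivity of $\widehat{T}$.

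The key step is a block computation in the decomposition $\h=\n(\Delta_T)\oplus\overline{\R(\Delta_T)}$ supplied by Theorem \ref{te23}. Writing $\Delta_0:={\Delta_T}|_{\overline{\R(\Delta_T)}}=\sigma^2Z^*Z+\Delta_{\widehat{T}}$, we have $\Delta_T=0\oplus\Delta_0$, while the upper-triangular form \eqref{eq26} gives $T^j=\left(\begin{smallmatrix}V^j&\ast\\0&\widehat{T}^j\end{smallmatrix}\right)$ for every $j$. Because $\Delta_T$ annihilates the $\n(\Delta_T)$-component, all off-diagonal contributions collapse and one obtains the operator identity $T^{*j}\Delta_TT^j=0\oplus\widehat{T}^{*j}\Delta_0\widehat{T}^j$, whence $B_{\Delta_T}^n(T)=0\oplus B_{\Delta_0}^n(\widehat{T})$. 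Thus $T$ is $(\Delta_T,m-1)$-hypercontractive precisely when $B_{\Delta_0}^n(\widehat{T})\ge 0$ for $1\le n\le m-1$.

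It then remains to trade the weight $\Delta_0$ for $I$, and this is where $\Delta_T$-regularity enters. By Theorem \ref{te23}(ii) the contraction $\widehat{T}$ commutes with $\Delta_0$, hence with $\Delta_0^{1/2}$ (and, taking adjoints, so does $\widehat{T}^*$). Pulling the square root through the powers yields $\widehat{T}^{*j}\Delta_0\widehat{T}^j=\Delta_0^{1/2}\widehat{T}^{*j}\widehat{T}^j\Delta_0^{1/2}$, and after summation $B_{\Delta_0}^n(\widehat{T})=\Delta_0^{1/2}B_n(\widehat{T})\Delta_0^{1/2}$. Since $\Delta_0$ is injective and positive (Theorem \ref{te23}), $\Delta_0^{1/2}$ has dense range, so conjugation by $\Delta_0^{1/2}$ both preserves and reflects positivity; consequently $B_{\Delta_0}^n(\widehat{T})\ge 0$ if and only if $B_n(\widehat{T})\ge 0$. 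Chaining the three equivalences gives the proposition.

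I expect the main obstacle to be the bookkeeping in the block computation of the second paragraph: one must verify that the ``$\ast$'' entries of $T^j$ genuinely drop out of $T^{*j}\Delta_TT^j$, which hinges exactly on $\Delta_T=0\oplus\Delta_0$ killing the first coordinate. The density argument for removing the weight is then routine once injectivity of $\Delta_0$ is recorded, and the combinatorial reduction of the first paragraph is purely formal.
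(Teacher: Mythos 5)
Your proposal is correct and follows essentially the same route as the paper: reduce $m$-hyperexpansivity of $T$ to $(\Delta_T,m-1)$-hypercontractivity, express $B_{\Delta_T}^n(T)=0\oplus B_{\Delta_0}^n(\widehat{T})$ via the triangular form \eqref{eq26}, commute $\Delta_0^{1/2}$ through using $\widehat{T}\Delta_0=\Delta_0\widehat{T}$, and strip the weight by injectivity of $\Delta_0$. Your version is in fact slightly tidier in two spots -- you verify the identity $B_{n+1}(T)=-B_{\Delta_T}^n(T)$ explicitly by Pascal's rule where the paper says ``by induction,'' and your symmetric conjugation $\Delta_0^{1/2}B_n(\widehat{T})\Delta_0^{1/2}$ with the dense-range argument cleanly fixes the paper's asymmetric display $\Delta_0[\cdot]\Delta_0^{1/2}$ -- but these are refinements of the same argument, not a different one.
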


\begin{proof}
We use the representation \eqref{eq26} of $T$ on $\h=\n(\Delta_T) \oplus \overline{\R(\Delta_T)}$. Thus $\Delta_T=0\oplus \Delta_0$, where $\Delta_0=\sigma^2 Z^*Z+\Delta_{\widehat{T}}\ge 0$ is an injective operator, while $\widehat{T}=P_{\overline{\R(\Delta_T)}}T|_{\overline{\R(\Delta_T)}}$ is a contraction with $\widehat{T}\Delta_0=\Delta_0\widehat{T}$. One can easily prove by induction that the relation
$$
\sum_{j=0}^m(-1)^j\begin{pmatrix} m\\ j \end{pmatrix} T^{*j}T^j\le 0\quad {\rm i.e.} \hspace*{2mm} T \hspace*{2mm} {\rm is}\hspace*{2mm} m-{\rm expansive}
$$
is equivalent to
$$
\sum_{j=0}^{m-1} (-1)^j\begin{pmatrix} m-1\\ j \end{pmatrix} T^{*j}\Delta_TT^j\ge 0 \quad {\rm i.e.} \hspace*{2mm} T \hspace*{2mm} {\rm is} \hspace*{2mm} (\Delta_T,m-1)-{\rm contractive}.
$$
Then the last relation can be expressed in terms of $\widehat{T}$ and $\Delta_0$ as
$$
\sum_{j=0}^{m-1} (-1)^j\begin{pmatrix} m-1\\ j \end{pmatrix} \widehat{T}^{*j}\Delta_0\widehat{T}^j \ge 0.
$$
Using that $\widehat{T}\Delta_0=\Delta_0\widehat{T}$, this is equivalent to
$$
\Delta_0\left[\sum_{j=0}^{m-1} (-1)^j\begin{pmatrix} m-1\\ j \end{pmatrix} \widehat{T}^{*j}\widehat{T}^j\right]\Delta_0^{1/2} \ge 0.
$$
Since $\Delta_0$ is injective, this inequality is equivalent to
$$
\sum_{j=0}^{m-1} (-1)^j\begin{pmatrix} m-1\\ j \end{pmatrix} \widehat{T}^{*j}\widehat{T}^j \ge 0,
$$
which means that $\widehat{T}$ is $(m-1)$-contractive. This argument shows that for $m\ge 2$ and for $2\le n\le m-1$, $T$ is $n$-expansive if and only if $\widehat{T}$ is $(n-1)$-contractive. In other words, $T$ is $m$-hyperexpansive if and only if $\widehat{T}$ is $(m-1)$-hypercontractive. This ends the proof.
\end{proof}

Now, for $\Delta_T$-regular concave operators we obtain an affirmative answer to the Cauchy dual problem (see \cite[Question 2.11]{Ch}).

\begin{theorem}\label{te34}
Let $T\in \B$ be a $\Delta_T$-regular concave operator. The following statements are equivalent:
\begin{itemize}
\item[(i)] $T$ is completely hyperexpansive;
\item[(ii)] The Cauchy dual $T'$ of $T$ is subnormal;
\item[(iii)] The compression of $T$ on $\overline{\R(\Delta_T)}$ is subnormal;
\item[(iv)] The compression of $T'$ to $\D_{T'}$ is subnormal.
\end{itemize}
\end{theorem}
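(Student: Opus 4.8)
The plan is to reduce all four conditions to statements about complete hypercontractivity and then match them up, since the three operators involved are contractions: $T'$ is a contraction because $T$ is expansive, and hence so are its compression $T_0'=\widehat T\Delta^{-1}$ to $\D_{T'}=\overline{\R(\Delta_T)}$ and the compression $\widehat T$ of $T$ to $\overline{\R(\Delta_T)}$. Thus Agler's theorem (\cite{Ag}) applies to each of $T'$, $T_0'$ and $\widehat T$: each is subnormal if and only if it is completely hypercontractive, i.e.\ $B_m(\cdot)\ge0$ for all $m\ge1$. Throughout I would use the block form \eqref{eq26} of $T$ and \eqref{eq31} of $T'$ on $\h=\n(\Delta_T)\oplus\overline{\R(\Delta_T)}$, together with the facts that $\Delta=\sigma^2Z^*Z+\widehat T^*\widehat T\ge I$ commutes with both $\widehat T$ and $\widehat T^*$ (hence so do $\Delta^{-1}$ and $\Delta'=I-\Delta^{-1}$), and that $\Delta'$ is injective because $\Delta-I=\Delta_0$ is.

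For (i)$\Leftrightarrow$(iii) I would simply combine Proposition \ref{pr33} with Agler's theorem: by Proposition \ref{pr33}, $T$ is $m$-hyperexpansive ($m\ge2$) exactly when $\widehat T$ is $(m-1)$-hypercontractive, so letting $m$ run over all integers $\ge2$ shows that $T$ is completely hyperexpansive if and only if $\widehat T$ is completely hypercontractive, that is if and only if the compression $\widehat T$ is subnormal.

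For (ii)$\Leftrightarrow$(iv) I would compute $B_m(T')$ in the block form $T'=\left(\begin{smallmatrix}V&B\\0&T_0'\end{smallmatrix}\right)$, $B=\sigma Z\Delta^{-1}$. Writing $T'^n=\left(\begin{smallmatrix}V^n&B_n\\0&T_0'^{\,n}\end{smallmatrix}\right)$ with $B_n=\sum_{k=0}^{n-1}V^{n-1-k}BT_0'^{\,k}$, the relation $V^*Z=0$ gives $V^*B=0$, whence $V^{*j}B_j=0$ and $B_j^*B_j=\sum_{k=0}^{j-1}T_0'^{*k}(B^*B)T_0'^{\,k}$. The top-left block of $B_m(T')$ is $\big(\sum_j(-1)^j\binom mj\big)I=0$, so the off-diagonal blocks vanish and $B_m(T')=0\oplus\mathcal B_m$. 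Using $B^*B=\Delta^{-1}-T_0'^*T_0'$, hence $B^*B-D_{T_0'}^2=-\Delta'$, the bottom-right block $G_n$ of $T'^{*n}T'^{\,n}$ satisfies $G_0=I$ and $G_n-G_{n-1}=-T_0'^{*(n-1)}\Delta'T_0'^{\,n-1}$, so a binomial summation (using $\sum_{j=k+1}^m(-1)^j\binom mj=(-1)^{k+1}\binom{m-1}{k}$) yields $\mathcal B_m=B_{\Delta'}^{\,m-1}(T_0')=\Delta'\,B_{m-1}(T_0')$, the last equality because $\Delta'$ commutes with $T_0'$ and $T_0'^*$. Since $\Delta'$ is positive and injective, $\Delta'B_{m-1}(T_0')\ge0\Leftrightarrow B_{m-1}(T_0')\ge0$; letting $m$ range over $\ge1$ shows $T'$ is completely hypercontractive iff $T_0'$ is, i.e.\ (Agler) $T'$ is subnormal iff $T_0'$ is.

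It remains to bridge the two halves by (iii)$\Leftrightarrow$(iv), which I expect to be the main obstacle. Here $T_0'^{*k}T_0'^{\,k}=\widehat T^{*k}\widehat T^{\,k}\,\Delta^{-2k}$, so the two complete-hypercontractivity conditions differ by the commuting weight $\Delta^{-2k}$. I would phrase complete hypercontractivity of $\widehat T$ as the statement that $(\widehat T^{*k}\widehat T^{\,k})_k$ is a Hausdorff moment sequence, $\widehat T^{*k}\widehat T^{\,k}=\int_0^1 t^k\,dE(t)$ for a positive operator measure $E$ on $[0,1]$; since $\Delta$ commutes with every $\widehat T^{*k}\widehat T^{\,k}$ it commutes with $E$. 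Then $T_0'^{*k}T_0'^{\,k}=\int_0^1(t\Delta^{-2})^k\,dE(t)$, and pushing the joint measure of the commuting pair $(E,\Delta^{-2})$ forward under $(t,\lambda)\mapsto t\lambda\in[0,1]$ — legitimate since $0<\Delta^{-2}\le I$ — represents $(T_0'^{*k}T_0'^{\,k})_k$ as a moment sequence on $[0,1]$, so $T_0'$ is completely hypercontractive. The converse is symmetric, the a priori support $[0,\|\Delta\|^2]$ of the pushed-forward measure being forced into $[0,1]$ by $\widehat T^{*k}\widehat T^{\,k}\le I$. The delicate point is the rigour of the operator moment representation and of the commuting push-forward; an equivalent and perhaps cleaner device is to diagonalise $\Delta$ in a direct integral over its spectrum, reducing (iii)$\Leftrightarrow$(iv) fibrewise to the scalar Hausdorff moment problem with scalar weight $\lambda^{-2}\le1$, where preservation of complete monotonicity is immediate. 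Combining the three links closes the chain (i)$\Leftrightarrow$(iii)$\Leftrightarrow$(iv)$\Leftrightarrow$(ii).
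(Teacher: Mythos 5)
Your proposal is correct, and for two of the three links it follows the paper's own architecture: the same chain (i)$\Leftrightarrow$(iii), (ii)$\Leftrightarrow$(iv), (iii)$\Leftrightarrow$(iv) on the same decomposition $\h=\n(\Delta_T)\oplus\overline{\R(\Delta_T)}$, with Proposition \ref{pr33} plus Agler's theorem for the first link, and the block computation reducing positivity of $B_m(T')$ to positivity of $B_{m-1}(T'_0)$ for the second; your identity $B_m(T')=0\oplus\Delta'B_{m-1}(T'_0)$ is in fact a correct, fully detailed version of what the paper waves through as ``it is easy to see'' (the paper's displayed sandwich formula is not quite the term-by-term identity, but the resulting complete-hypercontractivity equivalence is the same). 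The genuine divergence is the bridge (iii)$\Leftrightarrow$(iv), which you anticipated as the main obstacle and attacked with moment machinery; the paper instead settles it in two lines by citing Bram's theorem \cite{B}: the product of a subnormal operator with a commuting positive operator is subnormal, applied to $\widehat{T}'=\Delta^{-1}\widehat{T}$ and, conversely, to $\widehat{T}=\Delta\widehat{T}'$ -- since $\Delta\ge I$ is itself positive, Bram's lemma works in both directions and no support argument is needed. Your substitute does work: the commutation of $\Delta$ with the representing measure follows from determinacy of the Hausdorff moment problem, the product measure of a positive operator measure with a commuting spectral measure is legitimate, and the direct-integral variant is sound because commuting with $\Delta$ places $\widehat{T}$ in the commutant of $W^*(\Delta)$, hence among decomposable operators, with positivity and complete monotonicity checked fibrewise against the scalar weight $\lambda^{-2}\le 1$. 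What your route buys is self-containedness and an explicit mechanism (multiplicative convolution of Hausdorff moment measures, with the converse support forced into $[0,1]$ by $\widehat{T}^{*k}\widehat{T}^k\le I$); what it costs is exactly the technical overhead you flag, which the classical citation lets the paper bypass entirely.
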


\begin{proof}
The assertion (i) means that $T$ is $m$-hyperexpansive for every integer $m\ge 2$ which by Proposition \ref{pr33} is equivalent to the fact that $\widehat{T}=P_{\overline{\R(\Delta_T)}}T|_{\overline{\R(\Delta_T)}}$ is an $m$-hypercontraction for any $m\ge 1$. According to a result of Agler from \cite{Ag}, the operator $\widehat{T}$ is subnormal. Hence (i) is equivalent to (iii).

Next we take into account the representation \eqref{eq31} of $T'$ on $\h=\n(D_{T'})\oplus \D_{T'}$, where $\n(D_{T'})=\n(\Delta_T)$ (see the proof of Theorem \ref{te31}). Denote $\widehat{T}'=\widehat{T}\Delta^{-1}=P_{\D_{T'}}T'|_{\D_{T'}}$, where $\Delta=\Delta_0+I$ and $\Delta_0=\Delta_T|_{\overline{\R(\Delta_T)}}$. Then $D_{T'}^2=0\oplus (I-\Delta^{-1})$ and it is easy to see that $T'$ is $m$-hypercontractive if and only if
$$
(I-\Delta^{-1})^{1/2}\left[\sum_{j=0}^{n} (-1)^j\begin{pmatrix} n\\ j \end{pmatrix} \widehat{T}'^{*j}\widehat{T}'^j\right](I-\Delta^{-1})^{1/2}\ge 0, \quad 1\le n\le m.
$$
We use here that $\widehat{T}'(I-\Delta^{-1})=(I-\Delta^{-1})\widehat{T}'$ because $\widehat{T}\Delta=\Delta\widehat{T}$ by Theorem \ref{te23} (ii). Since $(I-\Delta^{-1})^{1/2}=D_{T'}|_{\D_{T'}}$ is injective, the previous inequality is equivalent to the fact that $\widehat{T}'$ is $n$-contractive for $1\le n\le m$, that is to the fact that $\widehat{T}'$ is $m$-hypercontractive. So, by Agler's result (see \cite{Ag}), we infer that the statements (ii) and (iv) are equivalent.

Finally, if $\widehat{T}$ is subnormal then, because $\widehat{T}'=\widehat{T}\Delta^{-1}=\Delta^{-1}\widehat{T}$ and $\Delta^{-1}\ge 0$, it follows by Bram's result in \cite{B} that $\widehat{T}'$ is subnormal too. Conversely, if $\widehat{T}'$ is subnormal, then $\widehat{T}=\widehat{T}'\Delta=\Delta\widehat{T}'$ is subnormal by the same argument. Thus the statements (iii) and (iv) are equivalent. The proof is complete.
\end{proof}

In particular, for 2-isometries we re-obtain the main assertion of \cite[Theorem 7.5]{ACJS} which was proved there in a different manner. As we already mentioned, in \cite{ACJS} the regular 2-isometries are called quasi-Brownian isometries. Clearly such operators are completely hyperexpansive. Thus by Theorem \ref{te34} we have the following

\begin{corollary}\label{co35}
The Cauchy dual $T'$ of a $\Delta_T$-regular 2-isometry $T$ is a subnormal contraction.
\end{corollary}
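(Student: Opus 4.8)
The plan is to recognize Corollary \ref{co35} as a direct specialization of Theorem \ref{te34}. First I would note that a $2$-isometry satisfies equality in \eqref{eq11} and is therefore in particular concave, with $\Delta_T\ge 0$ so that it is expansive. Consequently a $\Delta_T$-regular $2$-isometry is exactly a $\Delta_T$-regular concave operator of the kind covered by Theorem \ref{te34}, and the equivalence (i) $\Leftrightarrow$ (ii) there reduces the entire statement to checking condition (i), namely that $T$ is completely hyperexpansive, together with the already recorded fact that the Cauchy dual of an expansive operator is a contraction.

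The single computational point is to verify that every $2$-isometry is completely hyperexpansive, i.e. that $B_m(T)\le 0$ for all $m\ge 1$, with $B_m(T)=B_I^m(T)$ as in \eqref{eq12}. I would establish the telescoping recursion
\[
B_{m+1}(T)=B_m(T)-T^*B_m(T)T,
\]
which comes directly from Pascal's identity $\binom{m+1}{j}=\binom{m}{j}+\binom{m}{j-1}$ applied to the defining sum \eqref{eq12}. For a $2$-isometry one has $B_2(T)=T^{*2}T^2-2T^*T+I=0$, so the recursion propagates this to give $B_m(T)=0$ for every $m\ge 2$; meanwhile $B_1(T)=I-T^*T=-\Delta_T\le 0$ because $T$ is expansive. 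Hence $B_m(T)\le 0$ for all $m\ge 1$, which is precisely complete hyperexpansivity.

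With condition (i) established, Theorem \ref{te34} (the implication (i) $\Rightarrow$ (ii)) yields that the Cauchy dual $T'$ is subnormal. Finally, since $T$ is expansive, its Cauchy dual $T'$ is a contraction, as noted at the start of Section \ref{Sect:3}. Combining these two facts gives that $T'$ is a subnormal contraction, as claimed. I do not expect any genuine obstacle here: essentially all the work is already done inside Theorem \ref{te34}, and the only item to verify independently is the elementary recursion above showing that $2$-isometries are completely hyperexpansive.
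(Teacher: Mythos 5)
Your proof is correct and follows essentially the same route as the paper: the paper also deduces Corollary \ref{co35} by observing that a $\Delta_T$-regular $2$-isometry is completely hyperexpansive and then invoking the implication (i) $\Rightarrow$ (ii) of Theorem \ref{te34}, together with the fact that the Cauchy dual of an expansive operator is a contraction. The only difference is cosmetic: the paper dismisses complete hyperexpansivity of $2$-isometries with the word ``clearly,'' whereas you verify it explicitly via the telescoping recursion $B_{m+1}(T)=B_m(T)-T^*B_m(T)T$, which is a correct (and welcome) filling-in of that detail.
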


As a direct consequence of \cite[Proposition 5.6]{AS2} and Theorem \ref{te34} we have the following result which generalizes \cite[Corollary 7.6]{ACJS}.

\begin{corollary}\label{co36}
If $T$ is a concave operator with $\Delta_T$ of rank one, then $T$ is a $\Delta_T$-regular completely hyperexpansive operator and $T'$ is a subnormal contraction.
\end{corollary}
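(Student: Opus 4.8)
The plan is to exploit the fact that a rank-one covariance $\Delta_T$ collapses the only genuine hypotheses of Theorems \ref{te23} and \ref{te34} to trivialities. If $\Delta_T$ has rank one, then $\overline{\R(\Delta_T)}$ is one-dimensional, so the compression $\widehat{T}=P_{\overline{\R(\Delta_T)}}T|_{\overline{\R(\Delta_T)}}$ is multiplication by a scalar $c$ on that one-dimensional space, and $\Delta_0:=\Delta_T|_{\overline{\R(\Delta_T)}}$ is a strictly positive scalar multiple of the identity there. All three assertions will follow from this single observation together with the earlier structure theorems, which is essentially the structural input isolated in \cite[Proposition 5.6]{AS2}.

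First I would record that $T$ is non-isometric, since $\Delta_T\neq 0$, so that Theorem \ref{te23} applies. To obtain $\Delta_T$-regularity I would invoke the equivalence (i)$\Leftrightarrow$(ii) of Theorem \ref{te23}: the only real requirement there is that $\widehat{T}$ commute with $\Delta_0$. But $\Delta_0$ is a positive scalar on the one-dimensional space $\overline{\R(\Delta_T)}$, so every operator on that space, in particular the scalar $\widehat{T}$, commutes with it. Hence $T$ is a $\Delta_T$-regular concave operator.

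With $\Delta_T$-regularity in hand, Theorem \ref{te34} becomes available. The compression $\widehat{T}$ is a contraction, as established in the proof of Theorem \ref{te23}, and a scalar contraction $c$ with $|c|\le 1$ is normal, hence trivially subnormal. Thus statement (iii) of Theorem \ref{te34} holds, and the equivalences (iii)$\Leftrightarrow$(i)$\Leftrightarrow$(ii) yield at once that $T$ is completely hyperexpansive and that $T'$ is subnormal. (Alternatively, one could reach complete hyperexpansivity through Proposition \ref{pr33}, since a scalar contraction is completely hypercontractive.) Finally $T'$ is a contraction because $T$, being concave, is expansive, as recalled at the start of Section \ref{Sect:3}; this completes all three claims.

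I do not expect any substantive obstacle here: the content of the corollary is the recognition that rank-one covariance forces $\widehat{T}$ to be a scalar, after which the commutation condition of Theorem \ref{te23} and the subnormality condition (iii) of Theorem \ref{te34} are automatic. The only point deserving a line of care is verifying that $\widehat{T}$ is genuinely a contraction, so that ``scalar'' indeed means a subnormal contraction and the Agler-type criterion underlying Theorem \ref{te34} can be legitimately applied.
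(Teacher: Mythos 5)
Your proposal is correct, but it proceeds differently from the paper, which gives no computation at all: Corollary \ref{co36} is obtained there as a direct consequence of the external result \cite[Proposition 5.6]{AS2} combined with Theorem \ref{te34}, so the rank-one structure theory is outsourced to Agler--Stankus (and the statement is presented as a generalization of \cite[Corollary 7.6]{ACJS}). You replace that citation by the elementary observation that $\dim\overline{\R(\Delta_T)}=1$ forces $\widehat{T}$ and $\Delta_0:=\Delta_T|_{\overline{\R(\Delta_T)}}$ to be scalars, after which the commutation/regularity requirement is vacuous, $\widehat{T}$ is a normal (hence subnormal) contraction, and the equivalences of Theorem \ref{te34} return complete hyperexpansivity of $T$ and subnormality of $T'$ simultaneously; contractivity of $T'$ from expansivity of $T$ is correctly sourced to Section \ref{Sect:3}. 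The trade is reasonable: the paper's route is shorter and keeps the link with the known structure of operators with rank-one covariance, while yours is self-contained and shows the corollary needs nothing beyond Theorems \ref{te23} and \ref{te34} (or even just Proposition \ref{pr33} plus Agler's criterion) already available in the paper.

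One step should be tightened. You obtain $\Delta_T$-regularity from the implication (ii)$\Rightarrow$(i) of Theorem \ref{te23}, whose hypothesis (ii) includes more than the commutation relation — in particular that $\widehat{T}$ is a contraction and that $Z$ is an injective contraction for $\sigma^2=\|\Delta_T\|+1$; yet you later justify contractivity of $\widehat{T}$ by appealing to ``the proof of Theorem \ref{te23}'', where it is derived from (i), the very regularity you are establishing. This circle is cosmetic and breaks in one line: since $\n(\Delta_T)$ is $T$-invariant for any concave $T$, compressing $T^*\Delta_TT\le\Delta_T$ to $\overline{\R(\Delta_T)}$ gives $\widehat{T}^*\Delta_0\widehat{T}\le\Delta_0$ with $\Delta_0$ a strictly positive scalar, whence $|\widehat{T}|\le 1$ (and the remaining conditions of (ii) follow by equally trivial scalar checks). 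In fact regularity needs no structure theorem at all here: a rank-one positive operator satisfies $\Delta_T=\delta^2 P$ with $P$ an orthogonal projection of rank one and $\delta^2=\|\Delta_T\|$, so condition \eqref{eq13} reduces to $\delta^2 PT=\delta^2 PTP$, which is precisely the $T$-invariance of $\n(\Delta_T)=\n(P)$, valid for every concave operator. With that substitution your argument is complete.
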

\medskip

\section{Quasinormality conditions}
\medskip

We study now when the compressions of $T$ and $T'$ from the statements (iii) and (iv) of Theorem \ref{te34} are quasinormal.

\begin{theorem}\label{te41}
Let $T\in \B$ be a concave operator and let $m,n\ge 1$ be positive integers. Then $T^n$ is a $\Delta_{T^m}$-contraction and $\n(\Delta_T)=\n(\Delta_{T^n})$.

Moreover, if $T$ is $\Delta_T$-regular, then the following statements are equivalent:
\begin{itemize}
\item[(i)] $T^n$ is $\Delta_{T^m}$-regular for every $m,n\ge 1$;
\item[(ii)] The compression $\widehat{T}$ of $T$ to $\overline{\R(\Delta_T)}$ is quasinormal;
\item[(iii)] $\widehat{T}$ commutes with $\widehat{Z}^*\widehat{Z}$ where $\widehat{Z}=P_{\n(\Delta_T)}T|_{\overline{\R(\Delta_T)}}$;
\item[(iv)] The Cauchy dual $T'$ of $T$ is a regular $D_{T'^2}^2$-contraction;
\item[(v)] The compression $T'_0$ of $T'$ to $\overline{\R(\Delta_T)}$ is quasinormal;
\item[(vi)] $T'_0$ commutes with $T'^*_1T'_1$, where $T'_1=P_{\n(\Delta_T)}T'|_{\overline{\R(\Delta_T)}}$.
\end{itemize}
\end{theorem}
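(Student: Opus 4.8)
The plan is to separate the statement into the unconditional claims (before \emph{Moreover}) and the equivalences (i)--(vi) holding under $\Delta_T$-regularity, and to reduce each of the latter to a single commutation condition on the compression $\widehat{T}$. For the unconditional part the engine is the telescoping identity $\Delta_{T^n}=\sum_{j=0}^{n-1}T^{*j}\Delta_T T^j$. Concavity gives $T^*\Delta_T T\le\Delta_T$, so $\{T^{*k}\Delta_T T^k\}_k$ decreases and each term is $\ge 0$; comparing $T^{*n}\Delta_{T^m}T^n=\sum_{j=0}^{m-1}T^{*(n+j)}\Delta_T T^{n+j}$ termwise with $\Delta_{T^m}$ shows $T^n$ is a $\Delta_{T^m}$-contraction. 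For the kernels, $h\in\n(\Delta_{T^n})$ forces $\Delta_T^{1/2}T^jh=0$ for all $0\le j\le n-1$, hence $h\in\n(\Delta_T)$; conversely $\n(\Delta_T)$ is invariant for $T$, so each $T^jh\in\n(\Delta_T)$ and $\Delta_{T^n}h=0$. This gives $\n(\Delta_T)=\n(\Delta_{T^n})$.

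For the \emph{Moreover} part I would first fix coordinates. By Theorem~\ref{te23}(ii) write $T$ as in \eqref{eq26}, so $\Delta_T=0\oplus\Delta_0$ with $\Delta_0=\sigma^2Z^*Z+\Delta_{\widehat T}$ injective and $\widehat T\Delta_0=\Delta_0\widehat T$; hence $\widehat T$ commutes with $\Delta:=\Delta_0+I$ and with $\Delta^{-1}$, while $\widehat Z=\sigma Z$, so $\widehat Z^*\widehat Z=\sigma^2Z^*Z$. The telescoping identity localizes to $\Delta_{T^n}=0\oplus\bigl(\sum_{j=0}^{n-1}\widehat T^{*j}\widehat T^j\bigr)\Delta_0$, still injective. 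From Theorem~\ref{te31} I take $T'$ in the form \eqref{eq31}, giving $T'^*T'=I\oplus\Delta^{-1}$, $T'_0=\widehat T\Delta^{-1}$, $T'_1=\sigma Z\Delta^{-1}$ and $\overline{\R(\Delta_T)}=\D_{T'}$. The key identity to isolate for repeated use is $\sigma^2\Delta^{-1}Z^*Z\Delta^{-1}=\Delta^{-1}-\widehat T^*\widehat T\Delta^{-2}$, coming from $\sigma^2Z^*Z=\Delta-\widehat T^*\widehat T$ and $\widehat T\Delta=\Delta\widehat T$.

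With these coordinates each of (i), (ii), (iii), (v), (vi) collapses to a commutation statement about $\widehat T$, the block-matrix and injectivity cancellations being routine. Indeed (ii)$\Leftrightarrow$(iii) since $\Delta_{\widehat T}=\Delta_0-\sigma^2Z^*Z$, so $\widehat T$ commutes with $\widehat T^*\widehat T$ iff it commutes with $\widehat Z^*\widehat Z$. For (i)$\Leftrightarrow$(ii), the $\Delta_{T^m}$-regularity of $T^n$ reduces (using injectivity and the commutation $\widehat T\Delta_0=\Delta_0\widehat T$) to ``$\widehat T^n$ commutes with $\sum_{j=0}^{m-1}\widehat T^{*j}\widehat T^j$ for all $m,n$''; the case $m=2$, $n=1$ is exactly quasinormality, while conversely quasinormality yields $\widehat T^{*j}\widehat T^j=(\widehat T^*\widehat T)^j$, a polynomial in $\widehat T^*\widehat T$. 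For (ii)$\Leftrightarrow$(v) a direct computation with $\widehat T\Delta=\Delta\widehat T$ shows $T'_0=\widehat T\Delta^{-1}$ is quasinormal iff $\widehat T$ is. Finally (v)$\Leftrightarrow$(vi) follows from the displayed identity, which reads $T'^*_1T'_1=\Delta^{-1}-T'^*_0T'_0$, together with the fact that $T'_0$ commutes with $\Delta^{-1}$.

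The delicate item, and the step I expect to be the main obstacle, is (iv). Here I would compute $D_{T'^2}^2=0\oplus G$ and, via the displayed identity, reach the factorization $G=(I+\widehat T^*\widehat T\Delta^{-2})(I-\Delta^{-1})$, a product of commuting positive operators whose second factor is injective; hence $G$ is injective. The $D_{T'^2}^2$-regularity of $T'$ then reduces to ``$T'_0$ commutes with $G$'', which by injectivity of $I-\Delta^{-1}$ is equivalent to $T'_0$ commuting with $I+\widehat T^*\widehat T\Delta^{-2}$, i.e.\ to $\widehat T$ being quasinormal. It remains to see that the $D_{T'^2}^2$-\emph{contraction} clause in (iv) is automatic: rewriting $T'^*D_{T'^2}^2T'\le D_{T'^2}^2$ as $T'^{*2}D_{T'}^2T'^2\le D_{T'}^2$ and localizing, it becomes $\widehat T^{*2}\widehat T^2\Delta^{-4}(I-\Delta^{-1})\le I-\Delta^{-1}$, which holds because $\widehat T^{*2}\widehat T^2\Delta^{-4}\le I$ commutes with $I-\Delta^{-1}\ge 0$. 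This yields (iv)$\Leftrightarrow$(ii) and closes the equivalences. Throughout, the only real care is in tracking the commutations from $\widehat T\Delta=\Delta\widehat T$ and the injectivity used to cancel $\Delta^{-1}$, $\Delta_0$ and $G$; the factorization of $G$ is the one genuinely non-obvious step.
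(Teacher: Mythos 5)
Your proposal is correct, and its skeleton is the same as the paper's: fix the coordinates of Theorem \ref{te23}(ii) and the Cauchy dual form \eqref{eq31}, and collapse each of (i)--(vi) to the single hub condition that $\widehat{T}$ commutes with $\widehat{T}^*\widehat{T}$, using injectivity of $\Delta_0=\Delta_T|_{\overline{\R(\Delta_T)}}$ and the commutations flowing from $\widehat{T}\Delta_0=\Delta_0\widehat{T}$. The local differences are worth recording. For the unconditional part you use the telescoping identity $\Delta_{T^n}=\sum_{j=0}^{n-1}T^{*j}\Delta_TT^j$ and monotonicity of $T^{*k}\Delta_TT^k$, where the paper runs a two-stage induction (first $\Delta_{T^2}$, then $\Delta_{T^{m+1}}$ from $\Delta_{T^m}$); your identity also yields the localized form $\Delta_{T^m}=0\oplus\bigl(\sum_{j=0}^{m-1}\widehat{T}^{*j}\widehat{T}^j\bigr)\Delta_0$, which you then exploit for (i)$\Leftrightarrow$(ii) via the standard fact $\widehat{T}^{*j}\widehat{T}^j=(\widehat{T}^*\widehat{T})^j$ for quasinormal $\widehat{T}$ -- materially the same content as the paper's operator $\Delta_m=\Delta+\sum_{j=1}^{m-1}\widehat{T}^{*j}(\Delta-I)\widehat{T}^j$ (indeed $\Delta_m-I$ equals your $D_m$), but packaged more transparently. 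For the contraction half of (iv), the paper proves $T'^*D_{T'^2}^2T'\le D_{T'^2}^2$ abstractly from $2$-hypercontractivity of $T'$ (so it holds without any regularity assumption), whereas you verify the equivalent inequality $T'^{*2}D_{T'}^2T'^2\le D_{T'}^2$ in coordinates; both are fine here since the Moreover clause assumes $\Delta_T$-regularity. Finally, note that your formula $D_{T'^2}^2=0\oplus(I+\widehat{T}^*\widehat{T}\Delta^{-2})(I-\Delta^{-1})$ is the correct one: the paper's displayed expression $(I+\widehat{T}^*\widehat{T}\Delta'\Delta^{-1})\Delta^{-1}$ contains a typo (the final factor should be $\Delta'=I-\Delta^{-1}$, since $I-(I-\widehat{T}^*\widehat{T}\Delta^{-1}\Delta')\Delta^{-1}=\Delta'+\widehat{T}^*\widehat{T}\Delta'\Delta^{-2}$), although the paper's subsequent reduction to $\widehat{T}\widehat{T}^*\widehat{T}\Delta'\Delta^{-3}=\widehat{T}^*\widehat{T}^2\Delta'\Delta^{-3}$ matches the corrected formula, so nothing downstream is affected; your explicit check that $G$ is injective (product of commuting positives, one invertible, one injective) correctly supplies the cancellation step that the equivalence between $D_{T'^2}^2$-regularity and the commutation of $T'_0$ with $G$ requires.
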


\begin{proof}
Assume that $T$ is concave, that is a $\Delta_T$-contraction. So, one has $T^{*2}T^2-T^*T\le T^*T-I$. Then $T^2$ is also a $\Delta_T$-contraction, therefore we have $T^{*3}T^3-T^{*2}T^2\le T^*T-I$. Both these relations and the fact that $T$ is expansive give that $T$ is a $\Delta_{T^2}$-contraction. Indeed, we have
$$
T^{*3}T^3-T^{*2}T^2=T^*(T^{*2}T^2-T^*T)T\le T^*(T^*T-I)T=T^{*2}T^2-T^*T.
$$
We obtain
\begin{eqnarray*}
T^*\Delta_{T^2}T &=& T^{*3}T^3-T^*T=T^{*3}T^3-T^{*2}T^2+T^{*2}T^2-T^*T\\
& \le & T^{*2}T^2-T^*T+T^*T-I=\Delta_{T^2}.
\end{eqnarray*}
Since $\Delta_{T^2}\ge 0$ ($T$ being expansive) we conclude that $T$ is a $\Delta_{T^2}$-contraction.

We can now show by induction that $T$ is a $\Delta_{T^m}$-contraction for each integer $m\ge 2$. So, assuming that $T^*\Delta_{T^m}T \le \Delta_{T^m}$ for $m>2$, we have
\begin{eqnarray*}
T^*\Delta_{T^{m+1}}T&=& T^{*(m+2)}T^{m+2}-T^*T=T^{*2}\Delta_{T^m}T^2+T^{*2}T^2-T^*T\\
&\le & T^*\Delta_{T^m}T+\Delta_T=T^{*(m+1)}T^{m+1}-I=\Delta_{T^{m+1}}.
\end{eqnarray*}
Hence $T$ is a $\Delta_{T^m}$-contraction for any $m\ge 1$ and consequently $T^n$ is a $\Delta_{T^m}$-contraction for $m,n\ge 1$. In addition,
 because $T$ is expansive, one has $\Delta_T\le \Delta_{T^m}$. Therefore $\n(\Delta_{T^m})\subset \n(\Delta_T)$. But $\n(\Delta_T)$ is invariant for $T$, so also for $T^m$ and $T^m|_{\n(\Delta_T)}$ is an isometry, $T|_{\n(\Delta_T)}$ being so. Since $\n(\Delta_T)$ is also invariant for $T^{*m}T^m$ it follows that $\n(\Delta_T)\subset \n(\Delta_{T^m})$. We conclude that $\n(\Delta_{T^m})=\n(\Delta_T)$. The first assertion of theorem is proved.

Assume now that $T$ is $\Delta_T$-regular and that the statement (i) is true. Then $T$ is also $\Delta_{T^2}$-regular, as a $\Delta_{T^2}$-contraction, while $T$ and $T^2$ have on $\h=\n(\Delta_T)\oplus \overline{\R(\Delta_T)}=\n(\Delta_{T^2})\oplus \overline{\R(\Delta_{T^2})}$ the block matrices
$$
T=
\begin{pmatrix}
V & \widehat{Z}\\
0 & \widehat{T}
\end{pmatrix},
\quad T^2=
\begin{pmatrix}
V^2 & V\widehat{Z}+\widehat{Z}\widehat{T}\\
0 & \widehat{T}^2
\end{pmatrix}
$$
with $V^*\widehat{Z}=0$. Consequently, $V^{*2}(V\widehat{Z}+\widehat{Z}\widehat{T})=0$. But, by Theorem \ref{te23} (ii), one has $\widehat{T}(\widehat{Z}^*\widehat{Z}+\widehat{T}^*\widehat{T})=(\widehat{Z}^*\widehat{Z}+\widehat{T}^*\widehat{T})\widehat{T}$. Also, since $T^{*2}T^2=I\oplus [\widehat{Z}^*\widehat{Z}+\widehat{T}^*(\widehat{Z}^*\widehat{Z}+\widehat{T}^*\widehat{T})\widehat{T}]$ and $T$ is $\Delta_{T^2}$-regular, i.e. $\Delta_{T^2}T=\Delta_{T^2}^{1/2}T\Delta_{T^2}^{1/2}$, we infer that $\widehat{T}$ is a $\Delta_1$-contraction and $\Delta_1$-regular, where $\Delta_1:=\Delta_{T^2}|_{\overline{\R(\Delta_{T^2})}}=\Delta_{T^2}|_{\overline{\R(\Delta_T)}}$. Having in view the above expression of $T^{*2}T^2$ and the fact that $\Delta_1$ is a positive injective operator, we infer that
$$
\widehat{T}[\widehat{Z}^*\widehat{Z}+\widehat{T}^*(\widehat{Z}^*\widehat{Z}+\widehat{T}^*\widehat{T})\widehat{T}]=[\widehat{Z}^*\widehat{Z}+\widehat{T}^*
(\widehat{Z}^*\widehat{Z}+\widehat{T}^*\widehat{T})\widehat{T}]\widehat{T}.
$$
Using that $\widehat{T}\Delta=\Delta\widehat{T}$, where $\Delta:=T^*T|_{\overline{\R(\Delta_T)}}=\widehat{Z}^*\widehat{Z}+\widehat{T}^*\widehat{T}$, we can equivalently write the previous relation in the form
$$
\widehat{T}\widehat{Z}^*\widehat{Z}-\widehat{Z}^*\widehat{Z}\widehat{T} + (\widehat{T} \widehat{T}^*\widehat{T}- \widehat{T}^*\widehat{T}^2)\Delta=0.
$$
By the commutation $\widehat{T}\Delta=\Delta\widehat{T}$ we have $\widehat{T}\widehat{Z}^*\widehat{Z}-\widehat{Z}^*\widehat{Z}\widehat{T}= \widehat{T}^*\widehat{T}^2-\widehat{T} \widehat{T}^*\widehat{T}$, which together with the above relation lead to
$$
(\widehat{T} \widehat{T}^*\widehat{T}- \widehat{T}^*\widehat{T}^2)(\Delta-I)=0.
$$
Because $\Delta-I=\Delta_T|_{\overline{\R(\Delta_T)}}$ is positive and injective, one has $\overline{\R(\Delta-I)}=\overline{\R(\Delta_T)}$. So from the previous equality we infer that the contraction $\widehat{T}$ is quasinormal. Hence (i) implies (ii).

Next, we assume (ii), that is, $\widehat{T}$ is quasinormal. Since $T$ is $\Delta_T$-regular we have $\widehat{T}(\widehat{Z}^*\widehat{Z}+\widehat{T}^*\widehat{T})=(\widehat{Z}^*\widehat{Z}+\widehat{T}^*\widehat{T})\widehat{T}$. By our assumption we get that $\widehat{T}\widehat{Z}^*\widehat{Z}=\widehat{Z}^*\widehat{Z}\widehat{T}$. Therefore (ii) implies (iii). Also, the last commutation relation of $\widehat{T}$ to $\widehat{Z}^*\widehat{Z}$ implies, using the $\Delta_T$-regularity of $T$, that $\widehat{T}$ is quasinormal. So the assertions (ii) and (iii) are equivalent.

To prove that (ii) implies (i) we represent $T^m$ on $\h=\n(\Delta_T)\oplus \overline{\R(\Delta_T)}$ using the above block matrix of $T$, in the form
$$
T^m=
\begin{pmatrix}
V^m & \sum_{j=0}^{m-1}V^{m-j-1}\widehat{Z}\widehat{T}^j\\
0 & \widehat{T}^m
\end{pmatrix}
=
\begin{pmatrix}
V^m & Z_m\\
0 & \widehat{T}^m
\end{pmatrix}.
$$
Since $V^*\widehat{Z}=0$, we have $V^{*m}Z_m=0$. From this representation we get, using $V^*\widehat{Z}=0$,
\begin{eqnarray*}
T^{*m}T^m|_{\overline{\R(\Delta_T)}}&=& Z_m^*Z_m+\widehat{T}^{*m}\widehat{T}^m=\sum_{j=0}^{m-1}\widehat{T}^{*j}\widehat{Z}^*\widehat{Z}T^j+\widehat{T}^{*m}\widehat{T}^m\\
&=& \Delta + \sum_{j=1}^{m-1}\widehat{T}^{*j}(\Delta-I)\widehat{T}^j=:\Delta_m.
\end{eqnarray*}
Here, as above, $\Delta=\widehat{Z}^*\widehat{Z}+\widehat{T}^*\widehat{T}$.

Now, as $T$ is $\Delta_T$-regular, we have $\widehat{T}^n\Delta=\Delta\widehat{T}^n$ for $n\ge 1$. Also, assuming that the assertion (ii) is true, i.e. $\widehat{T}$ is quasinormal, we infer that $\widehat{T}^n\Delta_m=\Delta_m \widehat{T}^n$ for $m,n\ge 1$. Recall that  $\widehat{T}^n=T^n|_{\overline{\R(\Delta_m)}}$ is a contraction ($\widehat{T}$ being so) and $\Delta_m-I=\Delta_{T^m}|_{\overline{\R(\Delta_m)}}$ because $\overline{\R(\Delta_T)}=\overline{\R(\Delta_{T^m})}$. We also have that $T^n$ is a $\Delta_{T^m}$-contraction and from the above commutation of $\widehat{T}^n$ with $\Delta_m$ it follows that $T^n$ is $\Delta_{T^m}$-regular, for $m,n\ge 1$. Hence (ii) implies (i).

Next we use the representation \eqref{eq31} of $T'$ and denote $T'_0=\widehat{T}\Delta^{-1}=P_{\overline{\R(\Delta_T)}}T'|_{\overline{\R(\Delta_T)}}$, where $\Delta$ is as above. We have that $T'_0$ is quasinormal if and only if $\widehat{T}$ is quasinormal, taking into account that $\widehat{T}\Delta^{-1}=\Delta^{-1}\widehat{T}$ by the $\Delta_T$-regularity of $T$. So (ii) is equivalent to (v), and similarly (iii) is equivalent to (vi), where $T'_1=Z'\Delta^{-1}=\widehat{Z}\Delta^{-1}$ in \eqref{eq31}. To end the proof we will prove that (ii) is equivalent to (iv).

Notice firstly that since $T'$ is a $D_{T'}^2$-contraction, $T'^2$ will be also a $D_{T'}^2$-contraction, i.e. $T'^{*2}D_{T'}^2T'^2\le D_{T'}^2$, because $T'^{*2}D_{T'}^2T'^2\le T'^*D_{T'}^2T'$. Moreover, we get that
$$
T'^*D_{T'^2}^2T'= T'^*D_{T'}^2T'+ T'^{*2}D_{T'}^2T'^2
 \le  D_{T'}^2+T'^*D_{T'}^2T'=D_{T'^2}^2.
$$
Hence $T'$ is a $D_{T'^2}^2$-contraction. Since $T'$ is a contraction, we have $D_{T'}^2\le D_{T'^2}^2$. Therefore $\n(D_{T'^2}^2)\subset \n(D_{T'})=\n(\Delta_T)$. But the last kernel is an invariant subspace for $T'$, so also for $T'^2$. From the block matrix \eqref{eq31} of $T'$ we have that $T'^2|_{\n(\Delta_T)}$ is an isometry. Thus we obtain that $\n(\Delta_T)=\n(D_{T'}^2)=\n(D_{T'^2}^2)$. Consequently one has $\overline{\R(\Delta_T)}=\D_{T'}=\D_{T'^2}$.

Using the block matrix \eqref{eq31} of $T'$ on $\h=\n(\Delta_T)\oplus \overline{\R(\Delta_T)}$ we get $T'^2$ in the form
$$
T'^2=
\begin{pmatrix}
V^2& V\widehat{Z}\Delta^{-1}+\widehat{Z}\Delta^{-1}\widehat{T}\Delta^{-1}\\
0 & \widehat{T}\Delta^{-1}\widehat{T}\Delta^{-1}
\end{pmatrix}
=
\begin{pmatrix}
V^2 & (V\widehat{Z}+\widehat{Z}\Delta^{-1}\widehat{T})\Delta^{-1}\\
0 & \widehat{T}^2\Delta^{-2}
\end{pmatrix}.
$$
We used here that $\widehat{T}\Delta^{-1}=\Delta^{-1}\widehat{T}$ by $\Delta_T$-regularity of $T$. This representation of $T'^2$ gives immediately that
\begin{eqnarray*}
T'^{*2}T'^2 &= & I\oplus \Delta^{-1}[\widehat{Z}^*\widehat{Z}+ \widehat{T}^*\Delta^{-1}(\widehat{Z}^*\widehat{Z}+\widehat{T}^*\widehat{T})\Delta^{-1}\widehat{T}]\Delta^{-1}\\
&=& I\oplus \Delta^{-1}(\widehat{Z}^*\widehat{Z}+\widehat{T}^*\Delta^{-1}\widehat{T})\Delta^{-1}\\
&=& I\oplus \Delta^{-1}(\widehat{Z}^*\widehat{Z}+\widehat{T}^*(I-\Delta')\widehat{T})\Delta^{-1}\\
&=& I\oplus \Delta^{-1}(\Delta-\widehat{T}^*\Delta'\widehat{T})\Delta^{-1}\\
&=& I\oplus (I-\widehat{T}^*\widehat{T}\Delta^{-1}\Delta')\Delta^{-1}.
\end{eqnarray*}
Here $\Delta':=I-\Delta^{-1}$ and we used also the fact that $\widehat{T}$ commutes with both $\Delta^{-1}$ and $\Delta'$. We infer that $D_{T'^2}^2=0\oplus (I+\widehat{T}^*\widehat{T}\Delta'\Delta^{-1})\Delta^{-1}$
and thus $D_{T'^2}^2$-regularity of $T'$ will mean that $T'_0=P|_{\overline{\R(\Delta_T)}}T'|_{\overline{\R(\Delta_T)}}=\widehat{T}\Delta^{-1}$ commutes with $D_{T'^2}^2|_{\overline{\R(\Delta_T)}}=(I+\widehat{T}^*\widehat{T}\Delta'\Delta^{-1})\Delta^{-1}$. Equivalently, this relation can be written as $\widehat{T}\widehat{T}^*\widehat{T}\Delta'\Delta^{-3}=\widehat{T}^*\widehat{T}^2\Delta'\Delta^{-3}$, which holds if and only if $\widehat{T}$ is quasinormal, since $\Delta'=I-\Delta^{-1}=(\Delta-I)\Delta^{-1}$ is injective by the injectivity of $\Delta-I=\Delta_T|_{\overline{\R(\Delta_T)}}$. This argument shows that $D_{T'^2}^2$-regularity of $T'$ is equivalent to the quasinormality of $\widehat{T}$. We conclude that the assertions (ii) and (iv) of theorem are equivalent. The proof is complete.
\end{proof}

\begin{remark}\label{re42}
\rm
It is easily seen that if $T$ is a $\Delta_T$-regular concave operator, then $T'$ (and consequently $T$) is quasinormal if and only  if $T$ is an isometry (and in this case $T=T'$).

On the other hand, by the first assertion of Theorem \ref{te41} a concave operator $T$ is a $\Delta_{T^2}$-contraction, while the assertion (i) of Theorem \ref{te41} ensures that $T$ is $\Delta_{T^2}$-regular if $T$ is $\Delta_T$-regular. But $\Delta_{T^2}$-regularity of $T$ is equivalent to each of the other statements (i)-(vi), because it implies (ii), as we have seen in the previous proof. Thus the general statement (i) can be reduced only to (i) for $n=1$ and $m=2$, what can be easily verified in applications.
\end{remark}

As a consequence of Theorem \ref{te41} and Theorem \ref{te34} we have the following result.

\begin{corollary}\label{co43}
Let $T$ be a $\Delta_T$-regular concave operator. If $T$ is $\Delta_{T^2}$-regular, then $T$ is completely hyperexpansive. In particular, if $\Delta_T$ is of rank one, then $T$ is $\Delta_{T^2}$-regular.
\end{corollary}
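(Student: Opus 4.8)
The plan is to read off both assertions from the machinery already assembled in Theorems \ref{te41} and \ref{te34}, combined with the classical fact that every quasinormal operator is subnormal. No new computation is needed; the task is to chain the correct equivalences.

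For the first assertion I would start from the hypotheses that $T$ is $\Delta_T$-regular, concave, and $\Delta_{T^2}$-regular. The crucial observation, already isolated in Remark \ref{re42}, is that the proof of the implication (i) $\Rightarrow$ (ii) in Theorem \ref{te41} uses only $\Delta_{T^2}$-regularity; hence $\Delta_{T^2}$-regularity alone forces the compression $\widehat{T} = P_{\overline{\R(\Delta_T)}}T|_{\overline{\R(\Delta_T)}}$ to be quasinormal. Since quasinormal operators are subnormal, $\widehat{T}$ is subnormal, which is exactly condition (iii) of Theorem \ref{te34}. Invoking the equivalence (i) $\Leftrightarrow$ (iii) there then gives that $T$ is completely hyperexpansive.

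For the ``in particular'' clause the standing hypothesis already supplies that $T$ is $\Delta_T$-regular and concave, so I only need rank one of $\Delta_T$ to produce $\Delta_{T^2}$-regularity. Rank one of $\Delta_T$ makes $\overline{\R(\Delta_T)}$ one-dimensional, whence the compression $\widehat{T}$ acts on a one-dimensional space and is a scalar, hence trivially normal and quasinormal. By the equivalence of quasinormality of $\widehat{T}$ with the $\Delta_{T^2}$-regularity of $T$ recorded in Remark \ref{re42} (statement (ii) of Theorem \ref{te41} being equivalent to $\Delta_{T^2}$-regularity), I conclude that $T$ is $\Delta_{T^2}$-regular, so it falls under the first assertion (and is completely hyperexpansive, consistently with Corollary \ref{co36}).

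The only genuine subtlety, which I would present as the main step, is the correct bookkeeping of which regularity hypothesis drives the quasinormality of $\widehat{T}$: it is precisely $\Delta_{T^2}$-regularity, not the full strength of statement (i) of Theorem \ref{te41}. Once that is pinned down, the remaining links (quasinormal $\Rightarrow$ subnormal, the equivalence (i) $\Leftrightarrow$ (iii) of Theorem \ref{te34}, and the one-dimensionality in the rank-one case) are immediate, so no serious obstacle remains beyond citing the earlier results correctly.
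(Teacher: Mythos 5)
Your proposal is correct and follows essentially the route the paper intends: the corollary is stated as an immediate consequence of Theorems \ref{te41} and \ref{te34}, and Remark \ref{re42} records precisely your key bookkeeping point that $\Delta_{T^2}$-regularity alone (under the standing $\Delta_T$-regularity) already forces $\widehat{T}$ to be quasinormal, after which quasinormal $\Rightarrow$ subnormal and the equivalence (i) $\Leftrightarrow$ (iii) of Theorem \ref{te34} give complete hyperexpansivity. Your treatment of the rank-one case, via the scalar (hence quasinormal) compression $\widehat{T}$ on the one-dimensional space $\overline{\R(\Delta_T)}$ and the implication (ii) $\Rightarrow$ (i) of Theorem \ref{te41}, is likewise the intended argument.
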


\begin{corollary}\label{co44}
\hspace*{7cm}
\begin{itemize}
\item[(a)] If $T$ is a regular concave operator that satisfies one of (the equivalent) conditions of Theorem \ref{te41}, then $T^n$ is a $\Delta_{T^n}$-regular concave operator for every integer $n\ge 2$.
\item[(b)] If $T$ is concave and $\sigma^{-1}P_{\n(\Delta_T)}T|_{\overline{\R(\Delta_T)}}$ is an isometry, where $\sigma^2=\|\Delta_T\|+1$, then $T$ is $\Delta_T$-regular if and only if $\widehat{T}= P_{\overline{\R(\Delta_T)}}T|_{\overline{\R(\Delta_T)}}$ is a quasinormal contraction.
\end{itemize}
\end{corollary}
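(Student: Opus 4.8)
The plan is to treat the two parts separately, deriving each as a specialization of results already in hand.

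For (a), I would read off both claims from Theorem \ref{te41} by taking $m=n$. The unconditional first assertion of that theorem gives that $T^n$ is a $\Delta_{T^m}$-contraction for all $m,n\ge 1$; with $m=n$ this is $T^{*n}\Delta_{T^n}T^n\le \Delta_{T^n}$, which is exactly the statement that $T^n$ is concave. Since $T$ is $\Delta_T$-regular and satisfies one of the (equivalent) conditions of Theorem \ref{te41}, condition (i) holds, namely $T^n$ is $\Delta_{T^m}$-regular for every $m,n\ge 1$; taking $m=n$ again shows $T^n$ is $\Delta_{T^n}$-regular. Combining the two gives that $T^n$ is a $\Delta_{T^n}$-regular concave operator for every $n\ge 2$.

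For (b), the starting point is the block representation of the concave operator $T$ on $\h=\n(\Delta_T)\oplus\overline{\R(\Delta_T)}$,
$$T=\begin{pmatrix} V & \widehat{Z}\\ 0 & \widehat{T}\end{pmatrix},$$
with $V=T|_{\n(\Delta_T)}$ an isometry, $\widehat{Z}=P_{\n(\Delta_T)}T|_{\overline{\R(\Delta_T)}}$, $\widehat{T}=P_{\overline{\R(\Delta_T)}}T|_{\overline{\R(\Delta_T)}}$ and $V^*\widehat{Z}=0$. This form is valid for every concave operator, since $\n(\Delta_T)$ is invariant for $T$ and $\Delta_T\ge 0$ forces the off-diagonal entry of $T^*T$ to vanish. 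Computing $T^*T$ then gives $\Delta_T=0\oplus\Delta_0$ with $\Delta_0=\widehat{Z}^*\widehat{Z}+\widehat{T}^*\widehat{T}-I$ injective on $\overline{\R(\Delta_T)}$. The hypothesis that $\sigma^{-1}\widehat{Z}$ is an isometry means $\widehat{Z}^*\widehat{Z}=\sigma^2 I$, so with $\sigma^2=\|\Delta_T\|+1$ the operator collapses to $\Delta_0=\|\Delta_T\|I+\widehat{T}^*\widehat{T}$. This affine dependence on $\widehat{T}^*\widehat{T}$ is the crucial simplification that the hypothesis buys us.

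Next I would carry out the block computation of the regularity identity: $\Delta_TT=\Delta_T^{1/2}T\Delta_T^{1/2}$ reduces to $\Delta_0\widehat{T}=\Delta_0^{1/2}\widehat{T}\Delta_0^{1/2}$, and cancelling $\Delta_0^{1/2}$ on the left (legitimate by its injectivity) gives $\Delta_0^{1/2}\widehat{T}=\widehat{T}\Delta_0^{1/2}$, equivalently $\widehat{T}\Delta_0=\Delta_0\widehat{T}$; this is the content of the equivalence (i)$\Leftrightarrow$(ii) in Theorem \ref{te23}. Because $\Delta_0=\|\Delta_T\|I+\widehat{T}^*\widehat{T}$, commuting with $\Delta_0$ is the same as commuting with $\widehat{T}^*\widehat{T}$, and $\widehat{T}(\widehat{T}^*\widehat{T})=(\widehat{T}^*\widehat{T})\widehat{T}$ is precisely the quasinormality relation $\widehat{T}\widehat{T}^*\widehat{T}=\widehat{T}^*\widehat{T}^2$. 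This yields the asserted equivalence; for the contraction qualifier one notes that $\Delta_T$-regularity forces $\widehat{T}$ to be a contraction by Theorem \ref{te23}, whereas in the converse direction $\widehat{T}$ is assumed to be a contraction at the outset. Neither part poses a genuine obstacle: the only real care needed is the bookkeeping in (b)—confirming that the hypothesis on $\widehat{Z}$ reduces $\Delta_0$ to an affine function of $\widehat{T}^*\widehat{T}$, so that the regularity-commutation relation becomes exactly quasinormality of $\widehat{T}$.
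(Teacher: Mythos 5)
Your proof is correct and follows essentially the same route as the paper: part (a) is read off from Theorem \ref{te41} (the first assertion with $m=n$ giving concavity of $T^n$, and statement (i) with $m=n$ giving $\Delta_{T^n}$-regularity), and part (b) uses the block form \eqref{eq26} to reduce $\Delta_T$-regularity to $\widehat{T}$ commuting with $\Delta_0$, which under the isometry hypothesis on $\sigma^{-1}\widehat{Z}$ is a scalar plus $\widehat{T}^*\widehat{T}$, hence equivalent to quasinormality. Incidentally, your constant $\Delta_0=\|\Delta_T\|I+\widehat{T}^*\widehat{T}=(\sigma^2-1)I+\widehat{T}^*\widehat{T}$ corrects the paper's misprint $(\sigma^2+1)I+\widehat{T}^*\widehat{T}$; since the summand is scalar in either case, the argument is unaffected.
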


\begin{proof}
The assertion (a) follows directly from the statement (i) of Theorem \ref{te41}.

For the assertion (b) we assume that $Z=\sigma^{-1}P_{\n(\Delta_T)}T|_{\overline{\R(\Delta_T)}}$ is an isometry. Then, by the matrix representation of the concave operator $T$ on $\h=\n(\Delta_T)\oplus \overline{\R(\Delta_T)}$ with $V=T|_{\n(\Delta_T)}$, $\widehat{Z}=\sigma Z$ and $\widehat{T}=P_{\overline{\R(\Delta_T)}}T|_{\overline{\R(\Delta_T)}}$, we have $\Delta_T|_{\overline{\R(\Delta_T)}}=(\sigma^2+1)I+\widehat{T}^*\widehat{T}$. Thus $\widehat{T}$ commutes to $\Delta_T|_{\overline{\R(\Delta_T)}}$ (i.e. $T$ is $\Delta_T$-regular) if and only if $\widehat{T}$ is quasinormal.
\end{proof}

The second statement of this corollary shows that if $Z$ before is an isometry, then the condition of $\Delta_T$-regularity can be added to the equivalent statements of Theorem \ref{te41}. If $T$ is a $\Delta_T$-regular 2-isometry (i.e. a quasi-Brownian isometry), then $\widehat{T}$ is even an isometry. Therefore $T$ trivially satisfies the assertions of Theorem \ref{te41} in this case.

Returning to Theorem \ref{te34}, we remark that for $T\in \B$ the assertion (i) of Theorem \ref{te34} means that $T$ is an $A_n(T)$-contraction for any integer $n\ge 1$, where as in \eqref{eq12},
$$
A_n(T)=-B_n(T)\ge 0, \quad B_n(T)=B_I^n(T), \quad A_1(T)=-B_1(T)=\Delta_T.
$$
Under the assumption of $\Delta_T$-regularity of $T$, this fact ensures that $\widehat{T}$ in \eqref{eq26} is subnormal, while $\Delta_{T^n}$-regularity of $T$ for any $n\ge 1$ means that $\widehat{T}$ is quasinormal, by Theorem \ref{te41}.

We analyse now the case when $T$ is $A_n(T)$-regular, that is $A_n(T)T=A_n(T)^{1/2}TA_n(T)^{1/2}$.
\begin{theorem}\label{te46}
Let $T\in \B$ be a $\Delta_T$-regular completely hyperexpansive operator.
With the above notation, the following statements are equivalent:
\begin{itemize}
\item[(i)] $T$ is $A_n(T)$-regular for every integer $n\ge 2$;
\item[(ii)] $T$ is $A_j(T)$-regular for $j=2,3$;
\item[(iii)] $T$ is $A_2(T)$-regular and the compression of $T$ to $\overline{A_2(T)\h}$ is a quasinormal contraction.
\end{itemize}

In addition, if (one of) the assertions (i)-(iii) hold, then $\n(A_2(T))=\n(A_n(T))$ for $n\ge3$.
\end{theorem}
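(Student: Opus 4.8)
The plan is to reduce the whole theorem to the internal structure of the compression $\widehat{T}=P_{\overline{\R(\Delta_T)}}T|_{\overline{\R(\Delta_T)}}$ and of its own defect compression. Using Theorem \ref{te23} I write $T$ in the block form \eqref{eq26} on $\h=\n(\Delta_T)\oplus\overline{\R(\Delta_T)}$, so that $\widehat{T}$ is a contraction commuting with the positive injective operator $\Delta_0:=\Delta_T|_{\overline{\R(\Delta_T)}}$, and $\widehat{T}$ is subnormal by Theorem \ref{te34}. The starting point is the elementary recursion $A_{n+1}(T)=A_n(T)-T^{*}A_n(T)T$, which follows from Pascal's identity in the definition \eqref{eq12}. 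Feeding the block form into it and using $\widehat{T}\Delta_0=\Delta_0\widehat{T}$ (hence also $\widehat{T}^{*}\Delta_0=\Delta_0\widehat{T}^{*}$), an induction yields the key formula $A_n(T)=0\oplus\Delta_0 B_{n-1}(\widehat{T})$ on $\n(\Delta_T)\oplus\overline{\R(\Delta_T)}$, where every $B_{n-1}(\widehat{T})\ge 0$ by complete hyperexpansivity. In particular $A_2(T)=0\oplus\Delta_0 D_{\widehat{T}}^{2}$, so $\overline{A_2(T)\h}=\D_{\widehat{T}}$ and the compression appearing in (iii) is exactly $\widehat{T}_0:=P_{\D_{\widehat{T}}}\widehat{T}|_{\D_{\widehat{T}}}$.

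Next I translate each regularity condition. Since $\Delta_0$ is injective and commutes with $\widehat{T},\widehat{T}^{*}$, hence with $B_{n-1}(\widehat{T})$, writing out $A_n(T)T=A_n(T)^{1/2}TA_n(T)^{1/2}$ in block form and cancelling the injective factor $\Delta_0$ shows that $T$ is $A_n(T)$-regular if and only if $\widehat{T}$ is $B_{n-1}(\widehat{T})$-regular. Thus (i) means $\widehat{T}$ is $B_m(\widehat{T})$-regular for all $m$, and (ii) that it is $B_1$- and $B_2$-regular. Because $B_2(\widehat{T})\ge 0$, the operator $\widehat{T}$ is a $D_{\widehat{T}}^{2}$-contraction, so the criterion behind Theorem \ref{te23} applies at the level of $\widehat{T}$: regularity with respect to $B_1(\widehat{T})=D_{\widehat{T}}^{2}$ is equivalent to $\widehat{T}_0$ commuting with $D_0^{2}:=D_{\widehat{T}}^{2}|_{\D_{\widehat{T}}}$; and, granting this, a short computation of $B_2(\widehat{T})=0\oplus(D_0^{2}-\widehat{T}_0^{*}D_0^{2}\widehat{T}_0)=0\oplus D_0^{2}D_{\widehat{T}_0}^{2}$ together with another cancellation of the injective $D_0^2$ reduces regularity with respect to $B_2(\widehat{T})$ to the statement that $\widehat{T}_0$ is $D_{\widehat{T}_0}^{2}$-regular. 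After reduction the three conditions therefore read: $\widehat{T}_0$ commutes with $D_0^2$ and $\widehat{T}$ is $B_m(\widehat{T})$-regular for all $m$; $\widehat{T}_0$ commutes with $D_0^2$ and is $D_{\widehat{T}_0}^{2}$-regular; and $\widehat{T}_0$ commutes with $D_0^2$ and is quasinormal.

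The implication (i)$\Rightarrow$(ii) is then immediate, and (iii)$\Rightarrow$(ii) is routine (quasinormality of $\widehat{T}_0$ gives $D_{\widehat{T}_0}^{2}\widehat{T}_0=\widehat{T}_0D_{\widehat{T}_0}^{2}=D_{\widehat{T}_0}\widehat{T}_0D_{\widehat{T}_0}$). For (iii)$\Rightarrow$(i) I would argue exactly as in the proof of Theorem \ref{te41}, (ii)$\Rightarrow$(i): if $\widehat{T}_0$ is quasinormal then $\D_{\widehat{T}_0}$ reduces it, the tower of defect operators collapses, and an induction on the recursion gives $B_m(\widehat{T})=0\oplus D_0^{2}D_{\widehat{T}_0}^{2(m-1)}$, whence $\widehat{T}$ is $B_m(\widehat{T})$-regular for every $m$. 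The heart of the theorem is (ii)$\Rightarrow$(iii), where one must upgrade $D_{\widehat{T}_0}^{2}$-regularity of $\widehat{T}_0$ to quasinormality; here I would combine the commutation $\widehat{T}_0 D_0^{2}=D_0^{2}\widehat{T}_0$ with the $D_{\widehat{T}_0}^{2}$-regularity relation to produce an identity of the form $(\widehat{T}_0\widehat{T}_0^{*}\widehat{T}_0-\widehat{T}_0^{*}\widehat{T}_0^{2})\,(\text{injective operator})=0$, forcing $\widehat{T}_0$ quasinormal, in the spirit of the computation establishing (i)$\Rightarrow$(ii) in Theorem \ref{te41} but carried out one level down with $\widehat{T}_0$, $D_0^{2}$, $D_{\widehat{T}_0}^{2}$ in place of $T$, $T^*T|$, $\Delta_T|$. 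This bootstrap, and the careful bookkeeping of which defect space each condition lives on, is where I expect the real work to concentrate; subnormality of $\widehat{T}$ (hence positivity of all $B_m(\widehat{T})$) is what keeps the intermediate operators positive so that square roots and injectivity cancellations are legitimate.

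Finally, the supplementary statement follows by reading kernels off $A_n(T)=0\oplus\Delta_0 B_{n-1}(\widehat{T})$. Once $\widehat{T}_0$ is quasinormal one has $B_{n-1}(\widehat{T})=0\oplus D_0^{2}D_{\widehat{T}_0}^{2(n-2)}$; since $\Delta_0$ and $D_0^2$ are injective while quasinormality gives $\n(D_{\widehat{T}_0}^{2(n-2)})=\n(D_{\widehat{T}_0})$ for all $n\ge 3$, the kernels $\n(A_n(T))$ are independent of $n$ for $n\ge 3$, which is the asserted equality.
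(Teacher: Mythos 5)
Much of your reduction machinery is correct and genuinely different from the paper's route: the global block identity $A_n(T)=0\oplus\Delta_0B_{n-1}(\widehat{T})$, the translation ``$T$ is $A_n(T)$-regular iff $\widehat{T}$ is $B_{n-1}(\widehat{T})$-regular'' (legitimate, since $\Delta_0$ is injective and commutes with $\widehat{T}$, $\widehat{T}^*$, hence with all $B_m(\widehat{T})$ and with the relevant square roots), the further reductions to $[\widehat{T}_0,D_0^2]=0$ and to $D_{\widehat{T}_0}^2$-regularity of $\widehat{T}_0$, and the implications (i)$\Rightarrow$(ii), (iii)$\Rightarrow$(ii) and (iii)$\Rightarrow$(i) via $B_m(\widehat{T})=0\oplus D_0^2D_{\widehat{T}_0}^{2(m-1)}$ all check out. (The paper instead runs the kernel analysis through asymptotic limits of $A$-contractions, quoting results from the reference [S-2006], and performs its cancellation against $A_2(T)|_{\overline{\R(A_2(T))}}$, which is injective by construction.) However, there is a genuine gap exactly at the step you defer: both (ii)$\Rightarrow$(iii) and the supplementary kernel statement require the lemma that $D_{\widehat{T}_0}$ is injective, i.e.\ $\n(I-\widehat{T}_0^*\widehat{T}_0)=\{0\}$, and you never prove it. After your (correct) cancellations of $\Delta_0$ and $D_0^2$, hypothesis (ii) leaves you with precisely $[\widehat{T}_0,D_0^2]=0$ and $D_{\widehat{T}_0}^2\widehat{T}_0=D_{\widehat{T}_0}\widehat{T}_0D_{\widehat{T}_0}$, that is $D_{\widehat{T}_0}(D_{\widehat{T}_0}\widehat{T}_0-\widehat{T}_0D_{\widehat{T}_0})=0$; without injectivity of $D_{\widehat{T}_0}$ this does not force quasinormality, and it cannot do so formally: $D^2$-regularity of a contraction is strictly weaker than quasinormality in general --- by Remark \ref{re42}, the Cauchy dual $T'$ of any $\Delta_T$-regular concave $T$ is $D_{T'}^2$-regular, yet quasinormal only when $T$ is an isometry. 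The appeal to ``the spirit of Theorem \ref{te41}'' does not supply the missing injectivity either: there the injective factor was $\Delta_T|_{\overline{\R(\Delta_T)}}$, injective by construction as the restriction of a positive operator to the closure of its range, whereas your substitute $D_{\widehat{T}_0}^2=I-\widehat{T}_0^*\widehat{T}_0$ is the defect of a compression, and its injectivity is a substantive extra fact about this particular situation.

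The same lemma is what your last paragraph silently needs. From your formulas one gets $\n(A_n(T))=\n(\Delta_T)\oplus\n(D_{\widehat{T}})\oplus\n(D_{\widehat{T}_0})$ for $n\ge 3$, independent of $n$ --- but the theorem asserts equality with $\n(A_2(T))=\n(\Delta_T)\oplus\n(D_{\widehat{T}})$, which holds if and only if $\n(D_{\widehat{T}_0})=\{0\}$; so your closing phrase ``which is the asserted equality'' claims exactly the unproved point. Fortunately the lemma is true with a short proof: if $x\in\D_{\widehat{T}}$ and $\|\widehat{T}_0x\|=\|x\|$, then, since the component of $\widehat{T}x$ in $\D_{\widehat{T}}$ is $\widehat{T}_0x$ and $\widehat{T}$ is a contraction, $\|\widehat{T}x\|=\|x\|$, whence $D_{\widehat{T}}x=0$ and $x\in\n(D_{\widehat{T}})\cap\D_{\widehat{T}}=\{0\}$. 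This is the content of the paper's step $\n(I-S_C)=\n(I-C^*C)=\{0\}$ for its operator $C$ (your $\widehat{T}_0$). Inserting this lemma before (ii)$\Rightarrow$(iii) --- after which $D_{\widehat{T}_0}\widehat{T}_0=\widehat{T}_0D_{\widehat{T}_0}$ follows by cancellation, giving $[\widehat{T}_0,\widehat{T}_0^*\widehat{T}_0]=0$ --- and again in the final paragraph closes both gaps, and your argument then goes through as a legitimate alternative to the paper's proof.
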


\begin{proof}
By hypothesis and the above remark, we have that $T$ is an $A_n$-contraction, where $A_n:=A_n(T)$, for $n\ge 1$. This ensures that $\n(A_n)$ is an invariant subspace for $T$.

Obviously, (i) implies (ii). Assume now that (ii) holds, that is $T$ is $A_j$-regular for $j=2,3$, where
$$
A_2=\Delta_T-T^*\Delta_TT=I-2T^*T+T^{*2}T^2\ge 0,
$$
$$
A_3=A_2-T^*A_2T=I-3T^*T+3T^{*2}T^2-T^{*3}T^3\ge 0.
$$
Since $T$ is $\Delta_T$-regular, it follows from \cite[Proposition 2.1 and Theorem 4.6]{S-2006} that
$$
\n(A_2)=\n(\Delta_T)\oplus \n(I-S_{\widehat{T}})
$$
and that $\n(I-S_{\widehat{T}})$ is invariant for $\Delta_T$. Here $S_{\widehat{T}}$ is the asymptotic limit of the contraction $\widehat{T}=P_{\overline{\R(\Delta_T)}}T|_{\overline{\R(\Delta_T)}}$. But $\n(I-S_{\widehat{T}})$ is the maximum invariant subspace for $\widehat{T}$ on which $\widehat{T}$ is an isometry (see \cite{Ku}). In fact $\n(I-S_{\widehat{T}})=\n(I-\widehat{T}^*\widehat{T})$ because this last subspace is also invariant for $\widehat{T}$, $\widehat{T}$ being a $D_{\widehat{T}}^2$-contraction.

Using the block matrix \eqref{eq26} of $T$ and denoting $\Delta_0=\Delta_T|_{\overline{\R(\Delta_T)}}$, we have
$$
A_2=0\oplus (\Delta_0-\widehat{T}^*\Delta_0\widehat{T})=0\oplus (I-\widehat{T}^*\widehat{T})\Delta_0=0\oplus \Delta_0(I-\widehat{T}^*\widehat{T}),
$$
taking into account that $\widehat{T}\Delta_0=\Delta_0\widehat{T}$ (by Theorem \ref{te23}). Next we represent $\widehat{T}$ and $\Delta_0$ on $\overline{\R(\Delta_T)}=\n(D_{\widehat{T}})\oplus \D_{\widehat{T}}$ in the form
\begin{equation}\label{eq41}
\widehat{T}=
\begin{pmatrix}
\widehat{V} & D\\
0 & C
\end{pmatrix}, \quad
\Delta_0=\Delta_2\oplus \Delta_1,
\end{equation}
where $\widehat{V}$ is an isometry, $C, D$ are contractions, $\widehat{V}^*D=0$ and $\Delta_j\ge 0$ ($j=1,2$). Clearly, $\Delta_0$ can be written in this form because $\n(D_{\widehat{T}})$ is invariant for $\Delta_T=0\oplus \Delta_0$ on $\h=\n(\Delta_T)\oplus \overline{\R(\Delta_T)}$, so $\n(D_{\widehat{T}})$ reduces $\Delta_0$. Then we obtain from the relation $\widehat{T}\Delta_0=\Delta_0\widehat{T}$ that $C\Delta_1=\Delta_1C$. By the above expression of $A_2$ we get that
$$
A_2=0\oplus (I-D^*D-C^*C)\Delta_1=0\oplus \Delta_1(I-D^*D-C^*C)
$$
on $\h=\n(A_2)\oplus \D_{\widehat{T}}$, with $\D_{\widehat{T}}=\overline{\R(A_2)}$, because $\n(D_{\widehat{T}})=\n(A_2)\ominus \n(\Delta_T)$.

Next we use the expression of $A_3$ in terms of $A_2$ and the fact that $\n(A_3)$ is invariant for $T$. Thus, as $T$ is $A_2$-regular by (ii), we have by \cite[Proposition 2.1 and Theorem 4.6]{S-2006} that
$$
\n(A_3)=\n(A_2)\oplus \n(I-S_C),
$$
$C$ being the compression of $T$ to $\D_{\widehat{T}}=\overline{\R(A_2)}$. By \eqref{eq41} and the condition $\widehat{V}^*D=0$, we get that $\n(I-C^*C)\subset \n(I-\widehat{T}^*\widehat{T})$. Hence
$$
\n(I-S_C)=\n(I-C^*C)=\{0\}.
$$
We obtain $\n(A_3)=\n(A_2)$ and so $\overline{\R(A_3)}=\overline{\R(A_2)}$. Also, since $T$ is $A_2$-regular, we have
\begin{equation}\label{eq42}
C(I-D^*D-C^*)\Delta_1=(I-D^*D-C^*C)\Delta_1C.
\end{equation}

On the other hand, as $A_3=A_2-T^*A_2T$, we infer
$$
A_3=0\oplus [(I-D^*D-C^*C)\Delta_1-C^*(I-D^*D-C^*C)\Delta_1C]=:0\oplus \Delta_3
$$
on $\h=\n(A_2) \oplus \D_{\widehat{T}}=\n(A_3)\oplus \D_{\widehat{T}}$. As $T$ is $A_3$-regular by (ii) and $C=P_{\overline{\R(A_3)}}T|_{\overline{\R(A_3)}}$, we have as in the proof of Theorem \ref{te23} that $C\Delta_3=\Delta_3C$. This relation, together with \eqref{eq42} and with the fact that $C\Delta_1=\Delta_1C$, lead to the identity
$$
CC^*C(I-D^*D-C^*C)\Delta_1=C^*C^2(I-D^*D-C^*C)\Delta_1.
$$
This means that $CC^*CA_2|_{\overline{\R(A_2)}}=C^*C^2A_2|_{\overline{\R(A_2)}}$. Since $A_2|_{\overline{\R(A_2)}}$ is an injective positive operator, we conclude that $CC^*C=C^*C^2$, i.e. $C$ is quasinormal. Hence (ii) implies (iii).

Next we assume that (iii) holds, that is $T$ is $A_2$-regular and that $C$ in \eqref{eq41} is quasinormal. We show that $\n(A_2)=\n(A_n)$ and that $T$ is $A_n$-regular for $n\ge 3$.
Recall that $A_n=A_{n-1}-T^*A_{n-1}T$ and that $T$ is an $A_n$-contraction, so $\n(A_n)$ is invariant for $T$, for any $n\ge 2$.
Since $T$ is also $A_2$-regular by (iii), it follows that $C$ commutes with $A_2|_{\overline{\R(A_2)}}=(I-D^*D-C^*C)\Delta_1$, $D$ and $\Delta_1$ from \eqref{eq41}. This gives that $CA_2^{1/2}h=A_2^{1/2}Th$ for $h\in \h$. Since $T$ is an $A_2$-contraction, $C$ is the (unique) contraction on $\overline{\R(A_2)}$ induced by $T$. Then, using \cite[Proposition 2.1 and Theorem 4.1]{S-2006}, we have
$$
\n(A_3)=\n(A_2-T^*A_2T)=(A_2^{1/2})^{-1}\n(I-S_C).
$$
As we have seen before we have $\n(I-S_C)=\{0\}$. Therefore we get
$$
\n(A_3)=\{h\in \h:A_2^{1/2}h=0\}=\n(A_2), \quad \overline{\R(A_3)}=\overline{\R(A_2)}=\D_{\widehat{T}}.
$$
Recall that $\Delta_3=A_3|_{\overline{\R(A_3)}}$. Therefore
$
\Delta_3=A_2|_{\D_{\widehat{T}}}-C^*(A_2|_{\D_{\widehat{T}}})C.
$
Taking into account that $C$ is quasinormal and that it commutes with $A_2|_{\D_{\widehat{T}}}$, it follows that $C\Delta_3=\Delta_3C$. This means that $T$ is $A_3$-regular.

We show by induction that $\n(A_2)=\n(A_n)$ and that $T$ is $A_n$-regular for $n\ge 3$. We had proved this fact for $n=3$ and we assume now that $T$ is $A_j$-regular with $\n(A_j)=\n(A_2)$ for $3\le j\le n$. A simple computation shows that
$$
\Delta_m:=A_m|_{\overline{R(A_m)}}=\sum_{j=0}^{m-2}(-1)^j \begin{pmatrix} m-2\\ j \end{pmatrix} C^{*j}(A_2|_{\D_{\widehat{T}}})C^j
$$
for $m\ge 3$. So, as $C$ is quasinormal and it commutes to $A_2|_{\D_{\widehat{T}}}$, it follows that $C\Delta_m=\Delta_mC$ for $m\ge 3$. Furthermore, since for $m\le n$ one has $\overline{\R(A_m)}=\overline{\R(A_2)}=\D_{\widehat{T}}$, we have by this commutation relation that $C$ is the (unique) contraction on $\overline{\R(A_n)}$ induced by $T$ as an $A_n$-contraction. Then, by \cite[Proposition 2.1 and Theorem 4.1]{S-2006}, we get (as above)
$$
\n(A_{n+1})=\n(A_n-T^*A_nT)=(A_n^{1/2})^{-1}\n(I-S_C)=\n(A_n).
$$
Hence $C$ is the compression of $T$ to $\overline{\R(A_{n+1})}= \overline{\R(A_n)}$. Since $C\Delta_{n+1}=\Delta_{n+1}C$, we infer that $T$ is $A_{n+1}$-regular. We conclude that $T$ is $A_m$-regular and that $\n(A_m) =\n(A_2)$ for every integer $m\ge 2$. Thus (iii) implies (i) and all assertions are proved.
\end{proof}
From the above proof we can extract more information about the operator $C$.
\begin{corollary}\label{co47}
If $T$ is a completely hyperexpansive operator which is $A_j$-regular for $j=1,2,3$, then the compression of $T$ to $\overline{\R(A_2)}$ is a completely non isometric quasinormal contraction.
\end{corollary}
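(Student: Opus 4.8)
The plan is to recognize that the hypotheses of the corollary are exactly the standing hypothesis of Theorem~\ref{te46} together with its condition (ii), and then to read off the two required conclusions—quasinormality and complete non-isometry—essentially from the proof of that theorem. First I would note that $T$ completely hyperexpansive and $A_1$-regular (that is, $\Delta_T$-regular, since $A_1(T)=\Delta_T$) places $T$ under the standing hypothesis of Theorem~\ref{te46}, while $A_j$-regularity for $j=2,3$ is precisely statement (ii) there. Hence all of (i)--(iii) of Theorem~\ref{te46} hold, and by (iii) the compression $C=P_{\overline{\R(A_2)}}T|_{\overline{\R(A_2)}}$ is a quasinormal contraction. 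Here $C$ is indeed a contraction because $\overline{\R(A_2)}=\D_{\widehat{T}}\subset \overline{\R(\Delta_T)}$, so that $C$ coincides with the compression of the contraction $\widehat{T}$ to $\D_{\widehat{T}}$, i.e. with the lower-right block of $\widehat{T}$ in \eqref{eq41}.

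It then remains only to show that $C$ is completely non isometric, meaning that $C$ has no nonzero invariant subspace on which it acts isometrically. Recalling (cf. the proof of Theorem~\ref{te46} and \cite{Ku}) that, for $S_C$ the asymptotic limit of the contraction $C$, the subspace $\n(I-S_C)$ is the largest invariant subspace on which $C$ is an isometry, this amounts to checking $\n(I-S_C)=\{0\}$. I would recover this from the block representation \eqref{eq41}: for $x\in \n(I-C^*C)$ the orthogonality $\widehat{V}^*D=0$ gives $\|\widehat{T}x\|^2=\|Dx\|^2+\|Cx\|^2=\|Dx\|^2+\|x\|^2$, and since $\widehat{T}$ is a contraction this forces $Dx=0$ and $\|\widehat{T}x\|=\|x\|$, so $x\in \n(I-\widehat{T}^*\widehat{T})=\n(D_{\widehat{T}})$. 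But $x\in \D_{\widehat{T}}=\overline{\R(D_{\widehat{T}})}$, whence $x\in \n(D_{\widehat{T}})\cap \D_{\widehat{T}}=\{0\}$. Thus $\n(I-C^*C)=\{0\}$, and from $S_C\le C^*C$ we obtain $\n(I-S_C)\subset \n(I-C^*C)=\{0\}$. Together with the quasinormality already established, this proves that $C$ is a completely non isometric quasinormal contraction.

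The computational content is light, and in fact both ingredients are already present inside the proof of Theorem~\ref{te46}; the only genuine task is to assemble them. Consequently, the main obstacle is not an estimate but a matter of bookkeeping: correctly matching the hypotheses of the corollary to Theorem~\ref{te46} (in particular identifying $A_1$-regularity with $\Delta_T$-regularity) and identifying the triviality of $\n(I-S_C)$, which is isolated within that proof, with the property of being completely non isometric. No new inequality beyond the contractivity of $\widehat{T}$ and the relation $\widehat{V}^*D=0$ is required.
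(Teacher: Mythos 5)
Your proposal is correct and takes essentially the paper's own route: the paper gives no separate proof of Corollary \ref{co47} (it says the statement is extracted from the proof of Theorem \ref{te46}), and you extract exactly the same two ingredients, namely quasinormality of $C$ from the implication (ii)$\Rightarrow$(iii) and complete non-isometry from $\n(I-S_C)=\n(I-C^*C)=\{0\}$ via the block matrix \eqref{eq41} (your vector-level computation is just an unwound form of the paper's containment $\n(I-C^*C)\subset \n(I-\widehat{T}^*\widehat{T})$). One negligible slip: the identity $\|\widehat{T}x\|^2=\|Dx\|^2+\|Cx\|^2$ follows from the orthogonality of $\n(D_{\widehat{T}})$ and $\D_{\widehat{T}}$ in the decomposition, not from $\widehat{V}^*D=0$, which plays no real role at that step.
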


Note that if $T$ is a $\Delta_T$-regular 2-isometry,
then $A_n=0$ for $n\ge 2$. So Theorem \ref{te46} and Corollary \ref{co47} are meaningful only for completely hyperexpansive operators which are not $\Delta_T$-regular 2-isometries.
\subsection*{Acknowledgment}
The authors gratefully thank the Referee for the constructive
comments and recommendations
which definitely helped to improve the readability and quality of the paper.
The first named author was supported in part by the project FRONT of the French
National Research Agency (grant ANR-17-CE40-0021) and by the Labex CEMPI (ANR-11-LABX-0007-01).
The second named author was supported by a Project financed from Lucian Blaga University of Sibiu research
grants LBUS-IRG-2017-03.


\bibliographystyle{amsalpha}

\end{document}